\documentclass[a4paper,12pt]{amsart}
\usepackage{amsmath}
\usepackage{amssymb}
\usepackage{mathrsfs}
\usepackage{enumerate}
\usepackage{ifthen}
\usepackage{graphicx}
\usepackage{caption}
\usepackage[T1]{fontenc} 
\usepackage{tikz}

\newcommand{\D}{\mathbb{D}}

\renewcommand{\H}{\mathcal{H}}

\renewcommand{\ss}{\mathcal{S}}

\setlength{\topmargin}{-0.02in}


\nonstopmode \numberwithin{equation}{section}
\setlength{\textwidth}{15cm} \setlength{\oddsidemargin}{0cm}
\setlength{\evensidemargin}{0cm} \setlength{\footskip}{40pt}
\pagestyle{plain}

\newtheorem{theorem}{Theorem}[section]

\newtheorem{lemma}{Lemma}[section]
\newtheorem{problem}{Problem}[section]

\theoremstyle{remark}
\theoremstyle{definition}
\newtheorem{remark}{Remark}[section]

\theoremstyle{plain}
\numberwithin{equation}{section}
\numberwithin{theorem}{section}

\newenvironment{customthm}[1]
  {\innercustomthm}
  {\endinnercustomthm}


\begin{document}

\title{Variability regions for Schur class}

\author{Chen Gangqiang}
\address{Chen Gangqiang,
School of Mathematics and Computer Sciences,
Nanchang University, Nanchang 330031, China}
\address{
Graduate School of Information Sciences, Tohoku University, Aoba-ku, Sendai 980-8579, Japan}
\email{cgqmath@qq.com; chenmath@ncu.edu.cn}
\date{\today}

\subjclass[2010]{Primary 30F45; Secondary 30C80}
\keywords{Schwarz-Pick lemma, Rogosinski's lemma, Hyperbolic divided differences, Higher-order hyperbolic derivatives, Schur algorithm, Interpolation problem, Variability region}

\begin{abstract}
Let ${\mathcal S}$ be the class of analytic functions $f$ in the unit disk ${\mathbb D}$ with $f({\mathbb D}) \subset \overline{\mathbb D}$.
Fix pairwise distinct points $z_1,\ldots,z_{n+1}\in \mathbb{D}$ and corresponding interpolation values $w_1,\ldots,w_{n+1}\in \overline{\mathbb{D}}$. Suppose that $f\in{\mathcal S}$ and $f(z_j)=w_j$, $j=1,\ldots,n+1$. Then for each fixed $z \in {\mathbb D} \backslash \{z_1,\ldots,z_{n+1} \}$, we obtained a multi-point Schwarz-Pick Lemma, which determines the region of values of $f(z)$.

Using an improved Schur algorithm in terms of hyperbolic divided differences,
we solve a Schur interpolation problem involving a fixed point together with the hyperbolic derivatives up to a certain order at the point, which leads to a new interpretation to a generalized Rogosinski's Lemma.
For each fixed $z_0 \in {\mathbb D}$, $j=1,2, \ldots n$ and $\gamma  = (\gamma_0, \gamma_1 , \ldots , \gamma_n) \in {\mathbb D}^{n+1}$, denote by $H^jf(z)$ the hyperbolic derivative of order $j$ of $f$ at the point $z\in {\mathbb D}$,
 let ${\mathcal S} (\gamma) = \{f \in {\mathcal S} : f (z_0) = \gamma_0,H^1f (z_0) = \gamma_1,\ldots ,H^nf (z_0) = \gamma_n \}$.
We  determine the region of variability $V(z, \gamma ) = \{ f(z) : f \in {\mathcal S} (\gamma) \}$ for $z\in {\mathbb D} \backslash \{ z_0 \}$, which can be called "the generalized Rogosinski-Pick Lemma for higher-order hyperbolic derivatives".
\end{abstract}

\thanks{}

\maketitle
\pagestyle{myheadings}
\markboth{Chen Gangqiang}{Variability regions for Schur class}


\section{Introduction and preliminaries}
We denote by ${\mathbb C}$ the complex plane. Let ${\mathbb D}(c,\rho) = \{ z \in {\mathbb C} : |z-c| < \rho \}$,
and $\overline{\mathbb D}(c,\rho) = \{ z \in {\mathbb C} : |z-c| \leq \rho \}$
for $c \in {\mathbb C}$ and $\rho > 0$.
In particular, we denote the open and closed unit disks ${\mathbb D}(0,1)$ and
$\overline{{\mathbb D}}(0,1)$
by ${\mathbb D}$ and $\overline{{\mathbb D}}$ respectively.
From now on,  $\H$ denotes the class of analytic functions of $\D$ into itself, and $\mathcal{S}$  denotes the Schur class, i.e., the set of analytic functions $f$ in ${\mathbb D}$ with $| f| \leq 1$.
The  main  goal  of  this  paper  is  to  present  a  new  theory  which  essentially  concerns  the  Schur class.

The classic Schwarz-Pick Lemma \cite{Pick1916} states that $$\left|\frac{f(z)-f(z_1)}{1-\overline{f(z_1)}f(z)}\right|\le \left|\frac{z-z_1}{1-\overline{z_{1}} z}\right|,\quad f\in\H,\quad z_1\in \D.$$
Nontrivial equality holds if and only if $f$ is a conformal automorphism of $\D$ (see also \cite{avkhadiev2009schwarz}).
We denote by $\text{Aut}(\D)$ the group of conformal automorphisms of $\D$.
If we define $T_{a}\in \text{Aut}(\D)$ by $T_{a}(z):=(z+a) /(1+\bar{a} z)$ for $z,a\in \mathbb D$,
then the Schwarz-Pick Lemma can be rewritten as follows.
\begin{customthm}{A (The Schwarz-Pick Lemma)}\label{thm:g-first}
Let $z_1,w_1\in\D$. Suppose that $f\in \H$, $f(z_1)=w_1$.
Set
$$
 f_{\alpha}(z) = T_{w_1}\big(\alpha T_{-z_1}(z)\big).$$
 Then for each fixed $z \in {\mathbb D} \setminus \{z_1\}$,
the region of values of $f(z)$ is the closed disk
$$\overline{\D}(c_1(z),\rho_1(z))
  = \{  f_{\alpha}(z ) : \alpha \in \overline{\mathbb D} \},
$$
where
$$c_1(z)=\frac{w_1(1-|T_{-z_1}(z)|^2)}{1-|w_1|^2|T_{-z_1}(z)|^2},\quad \rho_1(z)=\frac{(1-|w_1|^2)|T_{-z_1}(z)|}{1-|w_1|^2|T_{-z_1}(z)|^2},$$
and $f(z)$ is the form of
$$T_{w_1}\big(T_{-z_1}(z) g^*(z)\big),$$
where $g^*\in\ss$.
Further, $f(z)\in \partial\D(c_1(z),\rho_1(z))$ if and only if
$f=f_{\alpha}$ for some constant $\alpha \in \partial \D$.
\end{customthm}
Mercer \cite{mercer1997sharpened} and Kaptano{\u{g}}lu \cite{kaptanoglu2002refine} respectively concluded a two-point Schwarz-Pick Lemma in which  the images of two points in $\D$ are known. A number of sharpened versions
of the Schwarz-Pick Lemma can be found for example in  \cite{beardon2008multi-Dieudonne-points,chen2019,chen2020,
chen2021,cho2012multi}.
We will first extend their results to a multi-point Schwarz-Pick Lemma. For this purpose, we consider the following problem.
\begin{problem}\label{the-problem}
Let $n \in {\mathbb N}$. Fix pairwise distinct points $z_1,\ldots,z_{n+1}\in \mathbb{D}$ and corresponding interpolation values $w_1,\ldots,w_{n+1}\in \overline{\mathbb{D}}$. Suppose that $f\in\ss$ and $f(z_j)=w_j$, $j=1,\ldots,n+1$. Then for each fixed $z \in {\mathbb D} \backslash \{z_1,\ldots,z_{n+1} \}$, determine the region of values of $f(z)$.
\end{problem}
We  now  focus  on  another  famous  result  of  complex  analysis:  Rogosinski's  lemma \cite{rogosinski1934}. In  what  follows,
we  will  see  how  Rogosinski's  lemma  can  be  interpreted  in  terms  of  hyperbolic  derivatives.  Nevertheless, let's begin by recalling  the  statement  of  this  lemma.
\begin{customthm}{B (Rogosinski's Lemma)}
Let $f\in \H$ with $f(0)=0$ and $|f'(0)|<1$.
Then for $z\in\D\setminus\{0\}$, the region of values of $f(z)$ is the closed disk $\overline{\D}(c'(z),\rho'(z))$, where
 $$c'(z)=\dfrac{zf'(0)(1-|z|^2)}{1-|z|^2|f'(0)|^2},\quad \rho'(z)= \dfrac{|z|^2(1-|f'(0)|^2)}{1-|z|^2|f'(0)|^2}.$$
\end{customthm}
It is of interest to consider Rogosinski's Lemma as a
sharpened version of Schwarz's Lemma (see \cite{Duren1983univalent}).
In 2011, Rivard became interested again on this topic \cite{rivard2013application}. Avoiding the restriction $f(0) = 0$, he has been able to give an analogous version, called the Rogosinski-Pick Lemma, in terms of the hyperbolic derivative of first order.
Moreover,
his observation asserts that for $f\in\H$ and $z_0\in\D$,
depending on the hyperbolic derivative at $z_0$, we can determine the variability region of $f(z)$ for any point $z\in \D$.
Recently numerous significant articles concerning variability regions have been obtained \cite{Ali-Allu-Yanagihara-1,Ali-Allu-Yanagihara-2,Yanagihara2024Julia,Ponnusamy2007univalent,
Ponnusamy2009satisfying,
yanagihara2006Nachr,yanagihara2010families}.

The following text is devoted to the hyperbolic
divided differences and the higher-order hyperbolic derivatives. We will see their importance in
the solution to a Schur interpolation problem.
The notion of hyperbolic divided differences for $f\in \ss$ was recently introduced by Baribeau et al. \cite{Baribeau-Rivard2009} (see also \cite{Baribeau2013} and \cite{beardon2004multi}).
For $ z, w\in\overline{\D}$, we define
\begin{equation}\label{eq:dist}
[z,\,w]:=
\begin{cases}
\dfrac{z-w}{1-\overline{w}z} &\quad \text{if}~z\bar w\ne 1; \\
\infty &\quad\text{if}~z\bar w= 1.
\end{cases}
\end{equation}
It is convenient to memorize the fact that $T_a (z)=[z,-a]$ and $z=[[z,a],-a]$ for $z,a\in \mathbb D$.
We construct an operator $\Delta_{z_0}$, which maps every function $f\in \ss$ to $\Delta_{z_0} f\in \ss$, by
\begin{equation}\label{eq:hdquo}
\Delta_{z_0}f(z)=
\begin{cases}
\dfrac{[f(z),\,f(z_0)]}{[z,\,z_0]} &\quad \text{for}~z\ne z_0, \\
\null & \\
\dfrac{(1-|z_0|^2)f'(z_0)}{1-|f(z_0)|^2} &\quad\text{for}~z=z_0.
\end{cases}
\end{equation}
For $f,g\in\H$ and $z_{0}\in \mathbb{D}$ , we have the chain rule directly
$$
\Delta_{z_0}(f\circ g)=(\Delta_{g(z_0)}f)\circ g\cdot\Delta_{z_0}g.
$$
Let functions
 $f\in \mathcal{S} $ and fix $k$ pairwise distinct points $z_1, \ldots , z_k\in \mathbb{D}$. Set
$$\Delta^0f(z):=f(z),\quad z\in\mathbb{D},$$
Then we can iterate the process and construct the hyperbolic divided difference of order $j$ of the function $f$ for distinct parameters $z_1,\cdots,z_{j}$ as follows (cf. \cite{Baribeau-Rivard2009}):
$$
\Delta^jf(z;z_{j},\cdots,z_1)=(\Delta_{z_{j}}\circ\cdots\circ\Delta_{z_1})f(z),
$$
or recursively defined by
\begin{equation}
\Delta^jf(z;z_j,\cdots,z_1)=\frac{[\Delta^{j-1}f(z;z_{j-1},\cdots,z_1),\Delta^{j-1}f(z_j;z_{j-1},\cdots,z_1)]}{[z,z_j]}.
\end{equation}
The  concept  of  hyperbolic  divided  differences  involves  parameters  chosen  from  the  unit  disk, all of which are distinct  from  each  other.
In  what  follows,  we  will  deal  with  the  same  definition with only one single parameter (see \cite{rivard2011higher-orderHyperbolicDerivatives}).
Using a limiting process, we can define the hyperbolic divided difference of order $n$ of $f$ with parameter $z$ by
\begin{equation}\label{def:Delta-z-f-zeta}
\Delta_z^n f(\zeta)=\Delta^n f(\zeta;z,\cdots,z):=\lim_{z_{n}\to z}\cdots\lim_{z_1\to z}\Delta^n f(\zeta;z_{n},\cdots,z_1)\quad(\zeta\neq z).
\end{equation}
Then we can define the $n$-th order  hyperbolic derivative  of $f\in \H$ at the point $z\in \D$ by
$$H^n f(z):=\Delta_z^n f(z)=\Delta^n f(z;z,\cdots,z):=\lim_{\zeta\to z}\Delta^n f(\zeta;z,\cdots,z).$$
We note that the usual hyperbolic derivative coincides with the first-order hyperbolic derivative $H^{1} f$,
$$f^h(z):=\frac{(1-|z|^2)f'(z)}{1-|f(z)|^2}=H^{1} f(z).$$

In the following text, we shall present the essential tools which lead to a generalization of the Schwarz-Pick Lemma for hyperbolic divided differences and can be used to demonstrate a generalization of Rogosinski's Lemma for higher-order hyperbolic derivatives.
For given pairwise distinct points $z_1,\ldots,z_{n+1}\in\mathbb{D}$ and interpolation values $w_1,\ldots,w_{n+1}\in\overline{\mathbb{D}}$ we define the hyperbolic divided differences
$
\Delta_j^k$, $k=0,\ldots,n$, $j=k+1,\ldots,n+1$, by the following recursive procedure: Let $\Delta_j^0: = w_j$ for $j= 1, \ldots , n+1.$ Assume that, for some $k$ with $1\le k\le n$ and $j= k+ 1, \ldots , n+1$,  the divided differences $\Delta_{j}^{k-1}$ and $\Delta_{k}^{k-1}$ are given. Then
\begin{align}
&\Delta_{j}^{k}:=\frac{\left[\Delta_j^{k-1},\Delta_k^{k-1}\right]}{\left[z_j,z_k\right]},
&&\mathrm{if}\quad\left|[\Delta_j^{k-1},\Delta_k^{k-1}]\right|\leq\left|[z_j,z_k]\right|, \\
&\Delta_{j}^{k} :=\infty, &&\mathrm{in~all~other~cases.}
\end{align}
The essence of the rules is that the first formula is used as long as
the modulus of the result does not exceed 1,
otherwise $\Delta_j^k$ is set to $\infty$,
only except that $\Delta_j^k:=0$ if $\Delta_j^{k-1} =\Delta_k^{k-1}\in \partial\D$.

Since the hyperbolic divided differences lead naturally to the concept of higher-order hyperbolic derivatives,
we can treat a generalized version of Rogosinski's Lemma in a hyperbolic manner, which concerns the hyperbolic derivatives of order $n\ge 1$. That is to say,
we consider the Schur class where the first $n$-th hyperbolic derivatives at a fixed point are prescribed.
 To this end, we need to determine a solution to a particular interpolation problem involving a fixed point together with the hyperbolic derivatives up to a certain order at the point. Thus we  consider  the following two problems.
\begin{problem}[The Schur interpolation problem]\label{prob:Schur}
Let $n \in {\mathbb N}$, $z_0 \in {\mathbb D}$ and $\gamma  = (\gamma_0, \ldots , \gamma_n ) \in \overline{{\mathbb D}}^{n+1}$. Determine all functions $f$ satisfying
$$
{\mathcal S} (\gamma ) =
   \left\{ f \in {\mathcal S} : \; f(z_0)=\gamma_0,H^1f(z_0)=\gamma_1,
   \cdots H^nf(z_0)=\gamma_n   \right\},
$$
\end{problem}
For the sake of completeness, we give a solution to
the Schur interpolation problem \ref{prob:Schur}, which plays an important role
 in a new manner Rogosinski's Lemma.
\begin{problem}\label{the_problem}
For $z \in {\mathbb D} \backslash \{ z_0 \}$,
describe the variability region
\begin{equation*}
 V(z, \gamma )
 =
 \left\{f(z)
   : \; f \in {\mathcal S}  (\gamma )
  \right\}.
\end{equation*}
\end{problem}
 We note that Schur \cite{Schur-1917} and Ali et al. \cite{Ali-Allu-Yanagihara-1} have characterized Problem \ref{prob:Schur} and Problem \ref{the_problem} for $z_0=0$, respectively.  We will extend their results to $z_0\in \D$.

The organization of this paper is as follows. In Section 2, we state our first main theorem, called the multi-point Schwarz-Pick Lemma, which gives the solution to Problem \ref{the-problem}. We also provide several lemmas on the rational functions associated with the hyperbolic divided differences and give the detailed proof of our main theorem.
In Section 3, we reformulate the Schur algorithm in terms of hyperbolic divided differences, which allow us to solve the Schur interpolation problem. We also give an analogous version of Rogosinski's Lemma, called the generalized Rogosinski-Pick Lemma for higher-order hyperbolic derivatives, which gives the solution to Problem \ref{the_problem}.

\section{Multi-point Schwarz-Pick Lemma}
Throughout this section, we will assume that all the hyperbolic divided differences $|\Delta_j^{k}|<1$ for $0\le k<j\le n+1$, unless otherwise stated.
Schur \cite{Schur-1917} developed a recursive procedure to solve an interpolation problem.
In our study, we rephrase the Schur algorithm in terms of hyperbolic divided differences to construct all solutions $f\in\ss$ such that  $f(z_j)=w_j$ for $j=1,\ldots,n+1$.
Table \ref{table:Delta} describes the hyperbolic divided differences and the solutions $f$. It is worth mentioning that the algorithm used here can also be applied to variability regions of certain analytic functions \cite{Ali-Allu-Yanagihara-1}.

\begin{table}[h]
\caption{Table of hyperbolic divided differences for the multi-point Schwarz-Pick Lemma}
\label{table:Delta}
\begin{tabular}{|c|c|cccccccc}
\hline \multicolumn{2}{|c|}{ Points of $\D$ } & 1 & 2 & 3 & $\ldots$ & $n-1$ & $n$&$n+1$& \\
\hline$z_{1}$ & $w_{1}=\Delta_{1}^{0}$ & & & & & &\\
& &$\Delta_{2}^{1}$ & & & &
\\$z_{2}$ & $w_{2}=\Delta_{2}^{0}$ &  & $\Delta_{3}^{2}$& & & &
\\& &$\Delta_{3}^{1}$  & &  $\Delta_{4}^{3}$& &
 \\$z_{3}$ & $w_{3}=\Delta_{3}^{0}$   & &$\Delta_{4}^{2}$  & &$\ddots$ &
 \\&&$\Delta_{4}^{1}$ &  &$\Delta_{5}^{3}$ & & $\Delta_{n}^{n-1}$ &
\\$z_{4}$ & $w_{4}=\Delta_{4}^{0}$ & &$\Delta_{5}^{2}$ &  & $\vdots$&&$\Delta_{n+1}^{n}$&& \\& & $\Delta_{5}^{1}$ & & $\vdots$ & & $\Delta_{n+1}^{n-1}$ &&$\Delta^{n+1}f(z)$
 \\$z_{5}$ & $w_{5}=\Delta_{5}^{0}$ & & $\vdots$ & &$.\cdot$ & &$\Delta^nf(z)$& \\$\vdots$ & $\vdots$ & $\vdots$ & & $\Delta_{n+1}^{3}$ & &$\Delta^{n-1}f(z)$ & \\$z_{n}$ & $w_{n}=\Delta_{n}^{0}$ &  & $\Delta_{n+1}^{2}$ & &$\cdot^{\cdot^{\cdot}}$ & &  \\&&$\Delta_{n+1}^{1}$ & &$\Delta^3f(z)$ & &
\\$z_{n+1}$ & $w_{n+1}=\Delta_{n+1}^{0}$ & &$\Delta^2f(z)$ & & & & \\
&&$\Delta^1f(z)$&&&
\\$z$ & $\Delta^0f(z)=f(z)$ & & & & & &
\\\hline\end{tabular}
\end{table}

We first determine all functions $f\in \ss$ satisfying $f(z_1)=w_1=\Delta_1^0$.
By the maximum modulus principle, if $|\Delta_1^0|>1$, then there is no $f\in \ss$ such that $f(z_1)=\Delta_1^0$; if $|\Delta_1^0|=1$, then there is a unique solution
$f (z) \equiv \Delta_1^0$. Suppose  that  $|\Delta_1^0| <1$.
Then
$$
   \Delta^1f(z)
   =
   \frac{[f (z),f(z_0)]}{[z,z_0]}
$$
belongs to $\mathcal{S}$.
Consequently, the set of all solutions $f \in \mathcal{S}$ is given by
$$
   f (z)
   = T_{\Delta_1^0}(T_{-z_0}(z) \Delta^1f (z))
   = \frac{T_{-z_0}(z) \Delta^1f (z) + \Delta_1^0}
   {1+\overline{\Delta_1^0}T_{-z_0}(z) \Delta^1f (z)},
$$
where $\Delta^1f \in \mathcal{S}$ is arbitrary.
By recursively applying this procedure, we can obtain all general solutions $f\in\ss$ such that  $f(z_j)=w_j$ for $j=1,\ldots,n+1$.
Starting with $\Delta^0f:=f$, we define $\Delta^1f, \ldots , \Delta^{n+1}f
\in \mathcal S$ by
\begin{equation}\label{eq:Def_f_k}
\Delta^{k}f(z) = \frac{[\Delta^{k-1}f(z) ,\Delta^{k-1}f(z_k)]}{[z,z_k]}.
\end{equation}
We claim that
\begin{equation}\label{eq:Delta-k-f}
\Delta^kf(z_j)=\Delta_j^k,\quad j=k+1,\ldots, n.
\end{equation}

Indeed, for $k=0$, it is true that $f_0(z_j)=z_j$ for $j=1,\ldots,n$.
Assuming now that the assertion \eqref{eq:Delta-k-f} is true for some $k-1$ and all $j=k,\ldots,n$, then we have
$$
\Delta^kf(z_j)=\frac{[\Delta^{k-1}f(z_j),\Delta^{k-1}f(z_k)]}{[z_j,z_k]}
=\frac{[\Delta_j^{k-1},\Delta_k^{k-1}]}{[z_j,z_k]}=\Delta_j^k.
$$
In particular, $\Delta^kf(z_{k+1})=\Delta_{k+1}^k$.

The idea is to find $f$ from $\Delta^{n+1}f$ using the inversion formula from \eqref{eq:Def_f_k}.
Therefore, starting with $f_{n+1}\in \mathcal{S}$ we define the functions $f_n,\ldots,f_0$ recursively by
\begin{equation}\label{eq:f-k-chain}
f_{k}(z):=\big[[z,z_{k+1}]\cdot f_{k+1}(z),-\Delta_{k+1}^{k}\big]=T_{\Delta_{k+1}^{k}}(T_{-z_{k+1}}(z)f_{k+1}(z)).
\end{equation}
Then
$$f_k( z_{k+1}) = \big[[z_{k+1},z_{k+1}]\cdot f_{k+1}(z),-\Delta_{k+1}^{k}\big]=[ 0, -\Delta_{k+1}^{k}] = \Delta_{k+1}^{k},$$
and therefore $\Delta_{z_{k+1}}f_k= f_{k+ 1}$.

We proof by induction from $k=n$ to $k=0$ that
\begin{equation}\label{eq:f-k-Delta}
f_k(z_j)=\Delta_j^k, \quad j=k+1,\ldots,n+1.
\end{equation}
Indeed, for $k=n$, we already have $f_n( z_{n+1})=\Delta_{n+1}^{n}$. Assume \eqref{eq:f-k-Delta} holds for some $k$ and all $j=k+1,\ldots,n+1$, then for $j$ we have
$$
\begin{aligned}
f_{k-1}(z_{j})& =\begin{bmatrix}[z_j,z_k]\cdot f_k(z_j),-\Delta_k^{k-1}\end{bmatrix}  \\
&=\begin{bmatrix}[z_j,z_k]\cdot\Delta_j^k,-\Delta_k^{k-1}\end{bmatrix} \\
&=\left[[z_j,z_k]\cdot\frac{\left[\Delta_j^{k-1},\Delta_k^{k-1}\right]}{[z_j,z_k]},
-\Delta_k^{k-1}\right] \\
&=\left[[\Delta_j^{k-1},\Delta_k^{k-1}],-\Delta_k^{k-1}\right]=\Delta_j^{k-1}.
\end{aligned}$$
Together with $f_{k-1}( z_k)=\Delta_k^{k-1}$, we prove \eqref{eq:f-k-Delta}.
In particular, $f_0( z_j) = \Delta_j^0= w_j$ for $j= 1, \ldots , n+1$. We now set
$f:=f_0$, then $\Delta^kf(z)=f_k(z)$
so that $f=f_0$ is the form of
$$
f (z) =
      T_{\Delta_1^0} ( T_{-z_1}(z) T_{\Delta_2^1} ( \cdots T_{-z_n}(z) T_{\Delta_{n+1}^{n}} ( T_{-z_{n+1}}(z)f_{n+1}(z)) \cdots )),
$$
where $f_{n+1} \in \mathcal{S}$.
We rewrite \eqref{eq:f-k-chain} as
\begin{equation}\label{eq:recurrence_formula_for_f-0}
f (z ) =  f_0 (z)=\frac{T_{-z_{1}}(z) f_{1} (z)+ \Delta_{1}^0 }{1+ \overline{\Delta_{1}^0 }T_{-z_{1}}(z) f_{1} (z)},
\end{equation}
and
\begin{equation}\label{eq:recurrence_formula_for_f-k}
f_k (z ) = \frac{T_{-z_{k+1}}(z) f_{k+1} (z)+ \Delta_{k+1}^k }{1+ \overline{\Delta_{k+1}^k }T_{-z_{k+1}}(z) f_{k+1} (z)},\quad (k=0,1,\ldots,n).
\end{equation}
We now define sequences of rational functions recursively by
\begin{equation}\label{eq:initial-A-B-0}
 \begin{pmatrix}
  A_0 (z) & \widetilde{A}_0(z) \\
  B_0 (z) & \widetilde{B}_0 (z) \\
 \end{pmatrix}
 =
 \begin{pmatrix}
  \overline{\Delta_1^0} & 1 \\
  1 & \Delta_1^0 \\
 \end{pmatrix}
\end{equation}
and
\begin{equation}\label{eq:A-B-k-recurrence-formula}
 \begin{pmatrix}
  A_{k+1} (z) & \widetilde{A}_{k+1}(z) \\
  B_{k+1} (z) & \widetilde{B}_{k+1} (z) \\
 \end{pmatrix}
 =
 \begin{pmatrix}
  T_{-z_{k+1}}(z) & \overline{\Delta_{k+2}^{k+1}} \\
  T_{-z_{k+1}}(z)\Delta_{k+2}^{k+1} & 1 \\
 \end{pmatrix}
 \begin{pmatrix}
  A_k (z) & \widetilde{A}_k (z) \\
  B_k (z) & \widetilde{B}_k (z) \\
 \end{pmatrix} ,
\end{equation}
where $k=0,\ldots , n-1 $. From \eqref{eq:initial-A-B-0} and
\eqref{eq:A-B-k-recurrence-formula} it easily follows that
\begin{equation}\label{eq:A-B-1}
 \begin{pmatrix}
  A_1 (z) & \widetilde{A}_1(z) \\
  B_1 (z) & \widetilde{B}_1 (z) \\
 \end{pmatrix}
 =
 \begin{pmatrix}
  \overline{\Delta_2^{1}}+\overline{\Delta_1^{0}}  T_{-z_{1}}(z) &
  \Delta_1^{0} \overline{\Delta_2^{1}}+T_{-z_1}(z) \\
  1+\overline{\Delta_1^{0}}\Delta_2^{1}T_{-z_{1}}(z) &
  \Delta_1^{0}+\Delta_2^{1}  T_{-z_{1}}(z)  \\
 \end{pmatrix}.
\end{equation}

By (\ref{eq:A-B-k-recurrence-formula}) and induction, we have the following recurrence formula for $f(z)$,
\begin{equation}\label{eq:recurrence_formula_for-f}
      f (z) =
      \frac{T_{-z_{k+1}}(z) \widetilde{A}_k(z) f_{k+1} (z) + \widetilde{B}_k(z)}
      {T_{-z_{k+1}}(z)A_k(z) f_{k+1} (z) + B_k(z)},\quad  k=0,1, \ldots , n.
\end{equation}

For $ \varepsilon \in \overline{\mathbb D}$, let
\begin{align}
f_{\varepsilon }(z)
    =&  T_{\Delta_1^0} ( T_{-z_1}(z) T_{\Delta_2^1} ( \cdots T_{-z_n}(z) T_{\Delta_{n+1}^{n}} ( \varepsilon T_{-z_{n+1}}(z)) \cdots )), \quad z \in {\mathbb D}.
\label{def:extremal-f}
\end{align}
 Then $f_{\varepsilon}\in \ss$ satisfies $f_{\varepsilon}(z_j)=\Delta_j^0=w_j$ for $1\le j\le n+1$.
 We note that for each fixed $\varepsilon \in \overline{\mathbb D}$, $f_{\varepsilon} (z)$ is an analytic function of $z \in {\mathbb D}$, and for each fixed $z \in {\mathbb D}$, $f_{\varepsilon} (z)$ is an analytic function of $\varepsilon \in \overline{\mathbb D}$.
In particular, when $\varepsilon \in \partial {\mathbb D}$, $f_{\varepsilon} (z)$ is a finite Blaschke product of $z$.  Here we recall that a (finite) Blaschke product of degree $n$ takes the form
         $$ B(z)=e^{i \theta}\prod\limits_{j=1}^{n}
         \frac{z-z_j}{1-\overline{z_j}z}, \quad \theta \in \mathbb{R},\quad z_1,\ldots,z_n\in \D.$$

 The following lemma follows from  \eqref{eq:recurrence_formula_for-f} with the notation $f^*=f_{n+1}$.
\begin{lemma}\label{lemma:Schur_polynomial_and_Caratheodoy_problem}
Fix pairwise distinct points $z_1,\ldots,z_{n+1}\in \mathbb{D}$ and corresponding interpolation values $w_1,\ldots,w_{n+1}\in \overline{\mathbb{D}}$. Let
all the hyperbolic divided differences
$\Delta_j^k \in {\mathbb D}$ for $0\le k<j\le n+1$.
Then for any
$f\in\ss$ such that
$f(z_j)=w_j$ for $1\le j\le n+1$, there exists a unique $f^{*}\in \mathcal{S}$ such that
\begin{equation}\label{eq:recurrence_representation_for_f}
f (z) = \frac{T_{-z_{n+1}}(z)\widetilde{A}_n(z) f^*(z)+ \widetilde{B}_n(z)}{T_{-z_{n+1}}(z)A_n(z)f^*(z) + B_n(z)} .
\end{equation}
Conversely, for any $f^{*}\in \mathcal S$, if  $f$ is given by $(\ref{eq:recurrence_representation_for_f})$, then $f$ satisfies $f(z_j)=w_j$ for $1\le j\le n+1$. In particular, the function $f_{\varepsilon}$ defined by (\ref{def:extremal-f}) can be written as
\begin{equation}\label{eq:extremal-f-varepsilon}
f_{\varepsilon } (z)
   = \frac{ \varepsilon T_{-z_{n+1}}(z)\widetilde{A}_n(z)+ \widetilde{B}_n(z)}{ \varepsilon T_{-z_{n+1}}(z) A_n(z) + B_n(z)},
\end{equation}
and $f_{\varepsilon}(z_j)=w_j$ for $1\le j\le n+1$.
\end{lemma}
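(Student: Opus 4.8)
The plan is to read both halves of the equivalence off the recursive constructions already set up in this section, adding only the uniqueness assertion as a genuinely new ingredient. The standing hypothesis that each hyperbolic divided difference $\Delta_j^k$ with $0\le k<j\le n+1$ lies in $\D$ (not merely in $\overline{\D}$) is precisely what makes every hyperbolic operation below legitimate. For the ``only if'' part, given $f\in\ss$ with $f(z_j)=w_j$ for $1\le j\le n+1$, I would set $f^{*}:=\Delta^{n+1}f$, built via the Schur recursion \eqref{eq:Def_f_k}. One first checks $f^{*}\in\ss$: by \eqref{eq:Delta-k-f} and the remark following it, $\Delta^{k-1}f(z_k)=\Delta_k^{k-1}\in\D$, so the operator $\Delta_{z_k}$ of \eqref{eq:hdquo} is legitimately applicable to $\Delta^{k-1}f\in\ss$ and, being a map of $\ss$ into itself, returns again an element of $\ss$; iterating for $k=1,\dots,n+1$ gives $f^{*}\in\ss$. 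Next, the identity \eqref{eq:recurrence_formula_for_f-k} --- which amounts to $\Delta^{k}f=T_{\Delta_{k+1}^{k}}\!\bigl(T_{-z_{k+1}}(\cdot)\,\Delta^{k+1}f\bigr)$ once one knows $\Delta^{k}f(z_{k+1})=\Delta_{k+1}^{k}$ --- shows that the sequence $(\Delta^{k}f)_{k=0}^{n+1}$ coincides with the downward chain \eqref{eq:f-k-chain} starting from $f_{n+1}=f^{*}$, so \eqref{eq:recurrence_formula_for-f} at $k=n$, with $f_{n+1}$ replaced by $f^{*}$, is exactly \eqref{eq:recurrence_representation_for_f}.

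For the converse, take an arbitrary $f^{*}\in\ss$, put $f_{n+1}:=f^{*}$, and define $f_n,\dots,f_0$ by \eqref{eq:f-k-chain}. Each step composes the Blaschke factor $T_{-z_{k+1}}$ with the disk automorphism $T_{\Delta_{k+1}^{k}}\in\mathrm{Aut}(\D)$ (an automorphism because $\Delta_{k+1}^{k}\in\D$), so every $f_k$ stays in $\ss$; then $f:=f_0\in\ss$ satisfies $f(z_j)=\Delta_j^0=w_j$ for all $j$ by \eqref{eq:f-k-Delta}, while $f=f_0$ is given by \eqref{eq:recurrence_formula_for-f} at $k=n$, i.e.\ by \eqref{eq:recurrence_representation_for_f}.

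The uniqueness of $f^{*}$ I would obtain from the transfer matrix of \eqref{eq:initial-A-B-0}--\eqref{eq:A-B-k-recurrence-formula}. A direct computation gives $\det\bigl(\begin{smallmatrix}A_n&\widetilde A_n\\ B_n&\widetilde B_n\end{smallmatrix}\bigr)=\bigl(|\Delta_1^0|^2-1\bigr)\prod_{j=1}^{n}T_{-z_j}(z)\,\bigl(1-|\Delta_{j+1}^{j}|^2\bigr)$, a rational function of $z$ that is not identically zero precisely because $\Delta_1^0$ and all $\Delta_{j+1}^{j}$ lie in $\D$. Hence the coefficient ``determinant'' $T_{-z_{n+1}}(z)\bigl(\widetilde A_n B_n-\widetilde B_n A_n\bigr)$ of the M\"obius transformation $f^{*}\mapsto f$ in \eqref{eq:recurrence_representation_for_f} does not vanish identically, so off a finite subset of $\D$ this transformation is genuinely invertible; two candidates $f^{*},g^{*}\in\ss$ producing the same $f$ must then agree off that finite set, hence everywhere by the identity theorem. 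Equivalently, one may read \eqref{eq:recurrence_representation_for_f} ``upward'' through the chain to see that necessarily $f^{*}=\Delta_{z_{n+1}}\!\cdots\Delta_{z_1}f=\Delta^{n+1}f$, which depends on $f$ alone.

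Finally, in \eqref{def:extremal-f} the innermost argument is $\varepsilon\,T_{-z_{n+1}}(z)$ in place of $T_{-z_{n+1}}(z)f_{n+1}(z)$, so $f_{\varepsilon}$ is the output of the chain \eqref{eq:f-k-chain} with the constant choice $f_{n+1}\equiv\varepsilon$; since the constant function $\varepsilon$ belongs to $\ss$ whenever $\varepsilon\in\overline{\D}$, substituting $f^{*}\equiv\varepsilon$ into \eqref{eq:recurrence_representation_for_f} yields \eqref{eq:extremal-f-varepsilon}, and $f_{\varepsilon}(z_j)=w_j$ then follows from the converse part. The only step not essentially already carried out in the preceding pages is the uniqueness; the main point to watch elsewhere is the bookkeeping that keeps each intermediate $\Delta^{k}f$ (resp.\ $f_k$) inside $\ss$, which is exactly where the strict inclusions $\Delta_j^k\in\D$ are spent.
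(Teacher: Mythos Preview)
Your proposal is correct and follows essentially the same approach as the paper: the paper simply remarks that the lemma ``follows from \eqref{eq:recurrence_formula_for-f} with the notation $f^*=f_{n+1}$,'' relying on the forward and backward constructions \eqref{eq:Def_f_k}--\eqref{eq:recurrence_formula_for-f} already established in the preceding paragraphs, which is exactly what you unpack. The one genuinely new ingredient you supply is an explicit argument for uniqueness (via the nonvanishing of the M\"obius determinant, equivalently Lemma~\ref{lemma:tilde-A-B-k}), which the paper asserts but does not prove separately; your observation that one can alternatively read uniqueness as $f^{*}=\Delta^{n+1}f$ is also in the spirit of the paper's construction.
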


We next give some important properties of the rational functions $A_k (z)$, $B_k (z)$, $ \widetilde{A}_k (z)$ and $\widetilde{B}_k(z)$ ($k=0,1, \ldots , n$), which
are of degree at most $k$.
We observe that $\widetilde{A}_0 (z)=\overline{B_0 (1/\overline{z})}$ and $\widetilde{B}_0 (z)=\overline{A_0 (1/\overline{z})}$. Generally, we have the following formulas.
\begin{lemma}\label{lemma:relation_between_A-k_B-k}
For $k=1, \ldots , n$,
\begin{equation}\label{eq:tildeAk_tildeBk}
\widetilde{A}_k (z) = \overline{B_k (1/\overline{z})}\prod_{\ell=1}^kT_{-z_{\ell}}(z),
   \quad
   \widetilde{B}_k (z) = \overline{A_k (1/\overline{z})}\prod_{\ell=1}^kT_{-z_{\ell}}(z),
\end{equation}
and
\begin{equation}\label{eq:Ak_Bk}
{A}_k (z) = \overline{\widetilde{B}_k (1/\overline{z})}\prod_{\ell=1}^kT_{-z_{\ell}}(z),
   \quad
   B_k (z) = \overline{\widetilde{A}_k (1/\overline{z})}\prod_{\ell=1}^kT_{-z_{\ell}}(z).
\end{equation}
\end{lemma}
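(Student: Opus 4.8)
The plan is to prove the first pair of identities \eqref{eq:tildeAk_tildeBk} by induction on $k$ and then to read off \eqref{eq:Ak_Bk} from them for free. Throughout I would use the reflection $g^{\natural}(z):=\overline{g(1/\overline z)}$ on rational functions: it is conjugate-linear (so it conjugates scalar coefficients), multiplicative, and involutive, it fixes the constant $1$, and the single computation that powers the whole argument is $\bigl(T_{-a}\bigr)^{\natural}(z)=1/T_{-a}(z)$ for $a\in\mathbb{D}$. Since all the $z_j$ lie in $\mathbb{D}$, the rational functions in play have no poles meeting $1/\overline z$ for $z\in\mathbb{D}\setminus\{0\}$, so every identity below is an identity of rational functions, valid off a finite set.

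The first observation is that \eqref{eq:Ak_Bk} is merely \eqref{eq:tildeAk_tildeBk} read backwards: applying $(\cdot)^{\natural}$ to $\widetilde A_k=B_k^{\natural}\prod_{\ell=1}^{k}T_{-z_\ell}$ and using $\bigl(\prod_{\ell}T_{-z_\ell}\bigr)^{\natural}=\prod_{\ell}T_{-z_\ell}^{-1}$ gives $B_k=\widetilde A_k^{\natural}\prod_{\ell}T_{-z_\ell}$, and likewise $\widetilde B_k=A_k^{\natural}\prod_{\ell}T_{-z_\ell}$ gives $A_k=\widetilde B_k^{\natural}\prod_{\ell}T_{-z_\ell}$. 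So it is enough to establish \eqref{eq:tildeAk_tildeBk}. I would start the induction at $k=0$, where the empty product equals $1$ and \eqref{eq:tildeAk_tildeBk} is exactly the remark preceding the lemma, namely $\widetilde A_0=B_0^{\natural}=1$ and $\widetilde B_0=A_0^{\natural}=\Delta_1^0$, both immediate from \eqref{eq:initial-A-B-0}.

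For the inductive step, fix $k$ with $0\le k\le n-1$, abbreviate $\tau:=T_{-z_{k+1}}(z)$, $\beta:=\Delta_{k+2}^{k+1}$ and $\Pi_k:=\prod_{\ell=1}^{k}T_{-z_\ell}(z)$, so $\Pi_{k+1}=\tau\,\Pi_k$, and expand \eqref{eq:A-B-k-recurrence-formula} into its four scalar recursions $A_{k+1}=\tau A_k+\overline\beta B_k$, $\widetilde A_{k+1}=\tau\widetilde A_k+\overline\beta\widetilde B_k$, $B_{k+1}=\tau\beta A_k+B_k$, $\widetilde B_{k+1}=\tau\beta\widetilde A_k+\widetilde B_k$. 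Applying $(\cdot)^{\natural}$ to the $B_{k+1}$-recursion (remembering $\tau^{\natural}=1/\tau$ and $(\overline\beta)^{\natural}=\beta$) yields $\tau B_{k+1}^{\natural}=\overline\beta\,A_k^{\natural}+\tau B_k^{\natural}$; feeding the induction hypothesis $\widetilde A_k=\Pi_k B_k^{\natural}$, $\widetilde B_k=\Pi_k A_k^{\natural}$ into the $\widetilde A_{k+1}$-recursion then gives $\widetilde A_{k+1}=\Pi_k\bigl(\tau B_k^{\natural}+\overline\beta A_k^{\natural}\bigr)=\Pi_k\,\tau\,B_{k+1}^{\natural}=\Pi_{k+1}B_{k+1}^{\natural}$. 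The mirror-image computation — applying $(\cdot)^{\natural}$ to the $A_{k+1}$-recursion to get $\tau A_{k+1}^{\natural}=A_k^{\natural}+\tau\beta B_k^{\natural}$, then substituting the hypothesis into the $\widetilde B_{k+1}$-recursion — gives $\widetilde B_{k+1}=\Pi_{k+1}A_{k+1}^{\natural}$, which closes the induction. One can also package all four identities at once as the single matrix relation $P_k(z)\,J=\Pi_k(z)\,J\,P_k^{\natural}(z)$ with $P_k=\left(\begin{smallmatrix}A_k&\widetilde A_k\\ B_k&\widetilde B_k\end{smallmatrix}\right)$ and $J=\left(\begin{smallmatrix}0&1\\1&0\end{smallmatrix}\right)$; this form propagates through \eqref{eq:A-B-k-recurrence-formula} because the $2\times2$ transfer matrix there, call it $M_{k+1}$, satisfies $M_{k+1}J=\tau\,J\,M_{k+1}^{\natural}$, which is a two-line check.

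There is no genuine obstacle here; the lemma is a bookkeeping exercise built on $\bigl(T_{-a}\bigr)^{\natural}=1/T_{-a}$ together with the defining recursion. The only points needing attention are that the reflection $(\cdot)^{\natural}$ conjugates the scalars $\Delta_{k+2}^{k+1}$ (so $\beta^{\natural}=\overline\beta$ and $(\overline\beta)^{\natural}=\beta$) while it inverts each Blaschke factor $T_{-z_\ell}$, and the correct treatment of the empty product in the $k=0$ base case.
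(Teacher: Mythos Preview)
Your argument is correct and follows essentially the same route as the paper's proof: induction on $k$ driven by the single identity $\overline{T_{-a}(1/\overline z)}=1/T_{-a}(z)$, together with the observation that \eqref{eq:Ak_Bk} is \eqref{eq:tildeAk_tildeBk} read through the reflection. The only differences are cosmetic --- you start the induction at $k=0$ (where the paper instead verifies $k=1$ by hand after noting the $k=0$ case in the text), you wrap the reflection in the $(\cdot)^{\natural}$ notation, and you additionally offer the neat matrix reformulation $P_kJ=\Pi_k\,J\,P_k^{\natural}$, which the paper does not use.
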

\begin{proof}
We use induction on $k$ to prove \eqref{eq:tildeAk_tildeBk}.
When $k=1$,
\begin{align*}
T_{-z_1}(z) \overline{B_1 (1/\overline{z})}&=T_{-z_1}(z)(1+\Delta_1^{0}\overline{\Delta_2^{1}}\overline{T_{-z_{1}}(1/\overline{z})})\\
&=T_{-z_1}(z)+\Delta_1^{0}\overline{\Delta_2^{1}}(\overline{T_{-z_{1}}(1/\overline{z})}T_{-z_1}(z))\\
&=T_{-z_1}(z)+\Delta_1^{0}\overline{\Delta_2^{1}}= \widetilde{A}_1 (z).
\end{align*}
The second last equality comes from
$\overline{T_{-z_1}(1/ \overline {z})}=1/ T_{-z_1}(z)$.
Similarly we have
\begin{align*}
T_{-z_1}(z) \overline{A_1 (1/\overline{z})}&=T_{-z_1}(z)(\Delta_2^{1}+\Delta_1^{0}\overline{T_{-z_{1}}(1/\overline{z})})\\
&=\Delta_2^{1}T_{-z_1}(z)+\Delta_1^{0}(\overline{T_{-z_{1}}(1/\overline{z})}T_{-z_1}(z))\\
&=\Delta_2^{1}T_{-z_1}(z)+\Delta_1^{0}= \widetilde{B}_1 (z).
\end{align*}
Assume that \eqref{eq:tildeAk_tildeBk} holds for $k \geq 1$,
then by \eqref{eq:A-B-k-recurrence-formula} we have
\begin{align*}
\widetilde{A}_{k+1} (z)
     &=
     T_{-z_{k+1}}(z) \widetilde{A}_k (z) + \overline{\Delta_{k+2}^{k+1}} \widetilde{B}_k (z)\\
     &=
     \prod_{\ell=1}^{k+1}T_{-z_{\ell}}(z) \overline{B_k(1/\overline{z})} + \overline{\Delta_{k+2}^{k+1}} \prod_{\ell=1}^{k}T_{-z_{\ell}}(z)  \overline{A_k(1/ \overline{z})}\\
     &=
    \prod_{\ell=1}^{k+1}T_{-z_{\ell}}(z)
     \left\{\overline{\Delta_{k+2}^{k+1}(1/ \overline{T_{-z_{k+1}}(z)}) A_k(1/ \overline{z}) + B_k(1/ \overline{z})}\right\}\\
        &=
      \prod_{\ell=1}^{k+1}T_{-z_{\ell}}(z)
     \left\{\overline{\Delta_{k+2}^{k+1}T_{-z_{k+1}}(1/ \overline{z}) A_k(1/ \overline{z}) + B_k(1/ \overline{z})}\right\}
     =
    \prod_{\ell=1}^{k+1}T_{-z_{\ell}}(z) \overline{B_{k+1} (1/ \overline{z})} .
\end{align*}
Similarly $\widetilde{B}_{k+1} (z) = \prod_{\ell=1}^{k+1}T_{-z_{\ell}}(z)\overline{A_{k+1} (1/ \overline{z})}$.

From
$\widetilde{A}_k (z) = \overline{B_k (1/\overline{z})}\prod_{\ell=1}^kT_{-z_{\ell}}(z)$, we can get
$\widetilde{A}_k (1/\overline{z}) = \overline{B_k (z)}\prod_{\ell=1}^kT_{-z_{\ell}}(1/\overline{z})$, and then
$$\overline{\widetilde{A}_k (1/\overline{z})}
=B_k (z)\prod_{\ell=1}^k\overline{T_{-z_{\ell}}(1/\overline{z})}
=\frac{B_k (z)}{\prod_{\ell=1}^kT_{-z_{\ell}}(z)},$$
which yields $B_k (z) = \overline{\widetilde{A}_k (1/\overline{z})}\prod_{\ell=1}^kT_{-z_{\ell}}(z)$.
Similarly ${A}_k (z) = \overline{\widetilde{B}_k (1/\overline{z})}\prod_{\ell=1}^kT_{-z_{\ell}}(z)$.
\end{proof}

It  follows directly from \eqref{eq:initial-A-B-0} that $\widetilde{A}_0 (z) B_0(z) - A_0(z)  \widetilde{B}_0(z)=1-|\Delta_1^0|^2$. Generally, the following result will appear.
\begin{lemma}\label{lemma:tilde-A-B-k}
For $k=1, \ldots , n$,
\begin{equation}\label{eq:tilde-A-B-k}
  \widetilde{A}_k (z) B_k(z) - A_k(z)  \widetilde{B}_k(z) =  \prod_{\ell=1}^k T_{-z_{\ell}}(z) \prod_{\ell=0}^k (1-|\Delta_{\ell+1}^{\ell}|^2) .
\end{equation}
\end{lemma}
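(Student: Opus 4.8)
The plan is to argue by induction on $k$, exploiting the fact that the quantity $\widetilde{A}_k(z)B_k(z)-A_k(z)\widetilde{B}_k(z)$ is, up to sign, the determinant of the $2\times 2$ matrix
$$
M_k(z):=\begin{pmatrix} A_k(z) & \widetilde{A}_k(z) \\ B_k(z) & \widetilde{B}_k(z)\end{pmatrix},
$$
namely $\widetilde{A}_kB_k-A_k\widetilde{B}_k=-\det M_k$ (all identities being between rational functions of $z$). Since $\det$ is multiplicative, the recursion \eqref{eq:A-B-k-recurrence-formula} reduces the entire computation to evaluating the determinant of the single transfer matrix appearing there.

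First I would record the base case. From \eqref{eq:initial-A-B-0} one reads off $\det M_0=|\Delta_1^0|^2-1$, hence $-\det M_0=1-|\Delta_1^0|^2$, which is exactly \eqref{eq:tilde-A-B-k} for $k=0$ (with the empty product $\prod_{\ell=1}^0 T_{-z_\ell}(z)$ equal to $1$); this also agrees with the computation noted just before the lemma.

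Next comes the inductive step. Taking determinants in \eqref{eq:A-B-k-recurrence-formula} gives
$$
\det M_{k+1}=\det\begin{pmatrix} T_{-z_{k+1}}(z) & \overline{\Delta_{k+2}^{k+1}} \\ T_{-z_{k+1}}(z)\Delta_{k+2}^{k+1} & 1\end{pmatrix}\cdot \det M_k ,
$$
and the determinant of the transfer matrix is simply $T_{-z_{k+1}}(z)-|\Delta_{k+2}^{k+1}|^2\,T_{-z_{k+1}}(z)=T_{-z_{k+1}}(z)\bigl(1-|\Delta_{k+2}^{k+1}|^2\bigr)$. Therefore
$$
\widetilde{A}_{k+1}B_{k+1}-A_{k+1}\widetilde{B}_{k+1}=T_{-z_{k+1}}(z)\bigl(1-|\Delta_{k+2}^{k+1}|^2\bigr)\bigl(\widetilde{A}_kB_k-A_k\widetilde{B}_k\bigr),
$$
and substituting the inductive hypothesis and then tidying indices — the new factor $1-|\Delta_{k+2}^{k+1}|^2$ is the $\ell=k+1$ term of $\prod_{\ell=0}^{k+1}$, while $T_{-z_{k+1}}(z)$ extends $\prod_{\ell=1}^{k}$ to $\prod_{\ell=1}^{k+1}$ — yields \eqref{eq:tilde-A-B-k} with $k$ replaced by $k+1$. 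Running this from $k=0$ up to $k=n-1$ establishes the identity for all $k=1,\ldots,n$.

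There is essentially no hard step: the only points requiring care are the index bookkeeping in the two products and the sign coming from the determinant convention. If one prefers to avoid determinants, the same identity follows by expanding $\widetilde{A}_{k+1}B_{k+1}-A_{k+1}\widetilde{B}_{k+1}$ directly from the scalar form of \eqref{eq:A-B-k-recurrence-formula} (that is, $\widetilde{A}_{k+1}=T_{-z_{k+1}}\widetilde{A}_k+\overline{\Delta_{k+2}^{k+1}}\widetilde{B}_k$, and so on) and watching the cross terms cancel; the determinant argument is preferred here only because it makes the telescoping transparent.
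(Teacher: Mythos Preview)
Your proof is correct and is essentially the same as the paper's: the paper also observes that $\widetilde{A}_kB_k-A_k\widetilde{B}_k=-\det M_k$, writes $M_k$ as the product of the transfer matrices via \eqref{eq:A-B-k-recurrence-formula}, and takes determinants. Your version unwinds this inductively rather than all at once, but the content --- multiplicativity of $\det$ and the computation $\det\!\begin{pmatrix}T_{-z_{k+1}}&\overline{\Delta_{k+2}^{k+1}}\\ \Delta_{k+2}^{k+1}T_{-z_{k+1}}&1\end{pmatrix}=T_{-z_{k+1}}(1-|\Delta_{k+2}^{k+1}|^2)$ --- is identical.
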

\begin{proof}
For $k \geq 1$, we can prove the formulas elegantly by means of matrix calculus. We start from the equation
\begin{equation}\label{eq:induction}
 \begin{pmatrix}
  A_k (z) & \widetilde{A}_k(z) \\
  B_k (z) & \widetilde{B}_k (z) \\
 \end{pmatrix}
 =
  \begin{pmatrix}
  T_{-z_k}(z) & \overline{\Delta_{k+1}^{k}} \\
  \Delta_{k+1}^{k}T_{-z_k}(z) & 1 \\
 \end{pmatrix}
 \cdots
  \begin{pmatrix}
  T_{-z_1}(z) & \overline{\Delta_2^{1}} \\
  \Delta_2^{1}T_{-z_1}(z) & 1 \\
 \end{pmatrix}
 \begin{pmatrix}
  \overline{\Delta_1^0} & 1 \\
  1 & \Delta_1^0 \\
 \end{pmatrix}
 ,
\end{equation}
which follows easily from the recursive formula \eqref{eq:A-B-k-recurrence-formula}. Taking the determinant on both sides of the equation, we get \eqref{eq:tilde-A-B-k}.
\end{proof}

Because of \eqref{eq:tildeAk_tildeBk} and \eqref{eq:Ak_Bk}, Lemma \ref{lemma:tilde-A-B-k} can also be written as follows.
\begin{lemma}
For $k=0, \ldots , n$,
\begin{align*}
&\widetilde{A}_k (z) \overline{\widetilde{A}_k (1/\overline{z})}
- A_k(z)  \overline{A_k (1/\overline{z})} = \prod_{\ell=0}^k (1-|\Delta_{\ell+1}^{\ell}|^2),\\
&B_k(z)\overline{B_k (1/\overline{z})} -  \widetilde{B}_k(z)\overline{\widetilde{B}_k (1/\overline{z})} = \prod_{\ell=0}^k (1-|\Delta_{\ell+1}^{\ell}|^2).
\end{align*}
\end{lemma}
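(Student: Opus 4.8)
The plan is to obtain this lemma as an immediate rewriting of Lemma~\ref{lemma:tilde-A-B-k}, using the reflection formulas \eqref{eq:tildeAk_tildeBk} and \eqref{eq:Ak_Bk}, so that no new induction is required; the case $k=0$ is then a one-line check. For $k=0$, I would simply read off from \eqref{eq:initial-A-B-0} that $\widetilde A_0\equiv 1$, $A_0\equiv\overline{\Delta_1^0}$, $B_0\equiv 1$, $\widetilde B_0\equiv\Delta_1^0$, whence $\widetilde A_0(z)\overline{\widetilde A_0(1/\overline z)}-A_0(z)\overline{A_0(1/\overline z)}=1-|\Delta_1^0|^2$ and $B_0(z)\overline{B_0(1/\overline z)}-\widetilde B_0(z)\overline{\widetilde B_0(1/\overline z)}=1-|\Delta_1^0|^2$, which matches the single-term product $\prod_{\ell=0}^0(1-|\Delta_{\ell+1}^\ell|^2)=1-|\Delta_1^0|^2$.

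For $k\ge 1$, set $P(z)=\prod_{\ell=1}^k T_{-z_\ell}(z)$, a rational function that is not identically zero. For the first identity I would substitute into the left-hand side of \eqref{eq:tilde-A-B-k} the expressions $B_k(z)=\overline{\widetilde A_k(1/\overline z)}\,P(z)$ from \eqref{eq:Ak_Bk} and $\widetilde B_k(z)=\overline{A_k(1/\overline z)}\,P(z)$ from \eqref{eq:tildeAk_tildeBk}. This gives
\[
\widetilde A_k(z)B_k(z)-A_k(z)\widetilde B_k(z)
= P(z)\Bigl(\widetilde A_k(z)\overline{\widetilde A_k(1/\overline z)}-A_k(z)\overline{A_k(1/\overline z)}\Bigr),
\]
while Lemma~\ref{lemma:tilde-A-B-k} says the left-hand side equals $P(z)\prod_{\ell=0}^k(1-|\Delta_{\ell+1}^\ell|^2)$. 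Dividing both sides by the nonzero rational function $P(z)$ yields the first asserted identity. For the second identity I would instead substitute $\widetilde A_k(z)=\overline{B_k(1/\overline z)}\,P(z)$ and $A_k(z)=\overline{\widetilde B_k(1/\overline z)}\,P(z)$ into the same left-hand side, which becomes $P(z)\bigl(B_k(z)\overline{B_k(1/\overline z)}-\widetilde B_k(z)\overline{\widetilde B_k(1/\overline z)}\bigr)$, and again cancel $P(z)$.

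There is essentially no hard step here; the only point deserving a word of care is the legitimacy of cancelling $P(z)=\prod_{\ell=1}^k T_{-z_\ell}(z)$. Since each $T_{-z_\ell}$ is a nonconstant Möbius map, $P$ vanishes only at the finitely many points $z_1,\dots,z_k$, so the equality after division holds on $\mathbb D\setminus\{z_1,\dots,z_k\}$, and both sides being rational it extends to an identity of rational functions. (Alternatively, one notes that $g\mapsto\overline{g(1/\overline z)}$ is an involution and that $\prod_{\ell=1}^k\overline{T_{-z_\ell}(1/\overline z)}=1/P(z)$, exactly as used in the proof of Lemma~\ref{lemma:relation_between_A-k_B-k}, so the two substitutions above are mutually consistent.) No independent verification is needed beyond this, since the statement is merely a restatement of Lemma~\ref{lemma:tilde-A-B-k}.
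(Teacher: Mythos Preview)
Your proposal is correct and follows exactly the approach the paper intends: the paper states this lemma immediately after Lemma~\ref{lemma:tilde-A-B-k} with only the remark ``Because of \eqref{eq:tildeAk_tildeBk} and \eqref{eq:Ak_Bk}, Lemma~\ref{lemma:tilde-A-B-k} can also be written as follows,'' and you have simply spelled out that rewriting in detail, including the legitimate cancellation of the Blaschke factor $P(z)$.
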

\begin{lemma}\label{lemma:tildeAK+1-Ak}
For $k=1, \ldots , n$,
\begin{equation}\label{eq:tildeAK+1-Ak}
  \widetilde{A}_k(z)A_{k+1}(z)  -A_{k}(z) \widetilde{A}_{k+1}(z)=
  \overline{\Delta_{k+2}^{k+1}}\prod_{\ell=1}^kT_{-z_{\ell+1}}(z)\prod_{\ell=0}^k (1-|\Delta_{\ell+1}^{\ell}|^2).
\end{equation}
In particular,
$$A_1(z)  \widetilde{A}_0(z)-A_0(z) \widetilde{A}_1(z)
=\overline{\Delta_2^1}(1-|\Delta_1^0|^2).$$
\end{lemma}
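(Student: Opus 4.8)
The plan is to obtain \eqref{eq:tildeAK+1-Ak} directly from the one-step matrix recursion \eqref{eq:A-B-k-recurrence-formula} together with Lemma \ref{lemma:tilde-A-B-k}. The point is that, although each of $A_{k+1}$ and $\widetilde A_{k+1}$ mixes the $A$-row and the $B$-row of the $k$-th coefficient matrix, the particular combination $\widetilde A_kA_{k+1}-A_k\widetilde A_{k+1}$ is arranged so that the contributions proportional to $T_{-z_{k+1}}(z)$ cancel, and what survives is exactly the minor $\widetilde A_kB_k-A_k\widetilde B_k$ already evaluated in Lemma \ref{lemma:tilde-A-B-k}.

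In detail, I would first read off from the first column of \eqref{eq:A-B-k-recurrence-formula} the two scalar relations
$$A_{k+1}(z)=T_{-z_{k+1}}(z)\,A_k(z)+\overline{\Delta_{k+2}^{k+1}}\,B_k(z),\qquad \widetilde A_{k+1}(z)=T_{-z_{k+1}}(z)\,\widetilde A_k(z)+\overline{\Delta_{k+2}^{k+1}}\,\widetilde B_k(z).$$
Substituting these into $\widetilde A_k(z)A_{k+1}(z)-A_k(z)\widetilde A_{k+1}(z)$ and expanding, the two terms carrying the factor $T_{-z_{k+1}}(z)$ combine to $T_{-z_{k+1}}(z)\bigl(\widetilde A_k(z)A_k(z)-A_k(z)\widetilde A_k(z)\bigr)=0$, leaving
$$\widetilde A_k(z)A_{k+1}(z)-A_k(z)\widetilde A_{k+1}(z)=\overline{\Delta_{k+2}^{k+1}}\bigl(\widetilde A_k(z)B_k(z)-A_k(z)\widetilde B_k(z)\bigr).$$
For $k\ge1$ I would then insert the value of $\widetilde A_kB_k-A_k\widetilde B_k$ supplied by Lemma \ref{lemma:tilde-A-B-k}, which gives \eqref{eq:tildeAK+1-Ak} at once; for the ``in particular'' statement, i.e.\ the case $k=0$, I would use instead the identity $\widetilde A_0B_0-A_0\widetilde B_0=1-|\Delta_1^0|^2$ recorded immediately before Lemma \ref{lemma:tilde-A-B-k}.

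I do not expect a real obstacle here: the argument is a two-line determinant-type manipulation, and the only point needing attention is keeping track of the telescoping product of M\"obius factors $T_{-z_\ell}(z)$ inherited from Lemma \ref{lemma:tilde-A-B-k} and matching it with the product displayed in \eqref{eq:tildeAK+1-Ak}. An equivalent and slightly slicker route is to phrase everything in terms of the matrix product \eqref{eq:induction}: up to a sign, the left-hand side of \eqref{eq:tildeAK+1-Ak} is the $2\times2$ determinant of the matrix whose rows are the first rows of the $k$-th and the $(k+1)$-th coefficient matrices, and since the $(k+1)$-th matrix is obtained from the $k$-th one by left multiplication with the single factor appearing in \eqref{eq:A-B-k-recurrence-formula}, linearity of the determinant in its rows reduces that minor to $\overline{\Delta_{k+2}^{k+1}}$ times the determinant of the $k$-th coefficient matrix, which is precisely (up to sign) the quantity computed in Lemma \ref{lemma:tilde-A-B-k}. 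Note that, in contrast to Lemma \ref{lemma:relation_between_A-k_B-k}, no reflection symmetry relating the $A$'s and $B$'s is used in this computation.
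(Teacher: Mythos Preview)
Your proposal is correct and follows essentially the same route as the paper: expand $A_{k+1}$ and $\widetilde A_{k+1}$ via the recursion \eqref{eq:A-B-k-recurrence-formula}, observe that the $T_{-z_{k+1}}(z)$ terms cancel, and reduce to $\overline{\Delta_{k+2}^{k+1}}\bigl(\widetilde A_kB_k-A_k\widetilde B_k\bigr)$, which is then evaluated by Lemma~\ref{lemma:tilde-A-B-k}. The only cosmetic difference is that for $k=0$ the paper computes directly from the explicit entries \eqref{eq:initial-A-B-0} and \eqref{eq:A-B-1}, whereas your recursive argument handles $k=0$ uniformly using the identity $\widetilde A_0B_0-A_0\widetilde B_0=1-|\Delta_1^0|^2$; also, the two relations you write down come from the first \emph{row} of \eqref{eq:A-B-k-recurrence-formula}, not the first column.
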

\begin{proof}
When $k=0$, from \eqref{eq:initial-A-B-0} and \eqref{eq:A-B-1}, we have
\begin{align*}
A_{1}(z) \widetilde{A}_{0}(z)-A_{0}(z) \widetilde{A}_{1}(z)
&=\{\overline{\Delta_2^{1}}+\overline{\Delta_1^{0}} T_{-z_{1}}(z)\}
-\overline{\Delta_1^{0}}\{\Delta_1^{0} \overline{\Delta_2^{1}}+T_{-z_1}(z)\}\\
&=\overline{\Delta_2^{1}}(1-|\Delta_1^0|^2).
\end{align*}

For $k\ge 1$, by Lemma \ref{lemma:tilde-A-B-k} we have
\begin{align*}
&\widetilde{A}_k(z)A_{k+1}(z)  -A_{k}(z) \widetilde{A}_{k+1}(z) \\
&=\widetilde{A}_{k} (z)\{T_{-z_{k+1}}(z){A}_{k} (z)+ \overline{\Delta_{k+2}^{k+1}}
 {B}_{k} (z)\}-A_{k}(z)\{T_{-z_{k+1}}(z)\widetilde{A}_{k}(z)
 +\overline{\Delta_{k+2}^{k+1}}\widetilde{B}_{k}(z)\}\\
 &=\overline{\Delta_{k+2}^{k+1}}\{\widetilde{A}_{k}(z){B}_{k}(z)
 -{A}_{k}(z)\widetilde{B}_{k}(z)\}=\overline{\Delta_{k+2}^{k+1}}\prod_{\ell=1}^{k}T_{-z_{\ell}}(z) \prod_{\ell=0}^{k}(1-|\Delta_{\ell+1}^{\ell}|^2).
\end{align*}
\end{proof}
Correspondingly, we have the following observation.
\begin{lemma}\label{lemma:tildeBK+1-Bk}
For $k=0, \ldots , n$,
\begin{equation}\label{eq:tildeBK+1-Bk}
  B_{k}(z) \widetilde{B}_{k+1}(z)-\widetilde{B}_k(z)B_{k+1}(z)  =
  \Delta_{k+2}^{k+1}\prod_{\ell=0}^k\{T_{-z_{\ell+1}}(z) (1-|\Delta_{\ell+1}^{\ell}|^2)\}.
\end{equation}
\end{lemma}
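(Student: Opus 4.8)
The plan is to mimic the proof of Lemma \ref{lemma:tildeAK+1-Ak}, but reading off the \emph{second} row of the recurrence \eqref{eq:A-B-k-recurrence-formula} rather than the first. First I would record the two scalar identities that are precisely the bottom-row entries of the matrix product in \eqref{eq:A-B-k-recurrence-formula}, namely
$$B_{k+1}(z)=T_{-z_{k+1}}(z)\Delta_{k+2}^{k+1}A_k(z)+B_k(z),\qquad \widetilde{B}_{k+1}(z)=T_{-z_{k+1}}(z)\Delta_{k+2}^{k+1}\widetilde{A}_k(z)+\widetilde{B}_k(z).$$

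Next I would substitute these into $B_k(z)\widetilde{B}_{k+1}(z)-\widetilde{B}_k(z)B_{k+1}(z)$. The product $B_k(z)\widetilde{B}_k(z)$ appears in both terms and cancels, and what survives factors cleanly as
$$B_k(z)\widetilde{B}_{k+1}(z)-\widetilde{B}_k(z)B_{k+1}(z)=T_{-z_{k+1}}(z)\,\Delta_{k+2}^{k+1}\bigl(\widetilde{A}_k(z)B_k(z)-A_k(z)\widetilde{B}_k(z)\bigr).$$

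Finally I would invoke Lemma \ref{lemma:tilde-A-B-k} to replace the parenthesized factor by $\prod_{\ell=1}^k T_{-z_\ell}(z)\prod_{\ell=0}^k(1-|\Delta_{\ell+1}^\ell|^2)$, then absorb the leftover $T_{-z_{k+1}}(z)$ into that product and reindex via $\prod_{\ell=1}^{k+1}T_{-z_\ell}(z)=\prod_{\ell=0}^k T_{-z_{\ell+1}}(z)$, which puts the right-hand side into exactly the form $\Delta_{k+2}^{k+1}\prod_{\ell=0}^k\{T_{-z_{\ell+1}}(z)(1-|\Delta_{\ell+1}^\ell|^2)\}$ claimed in the statement. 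There is no genuine obstacle here — the computation runs entirely parallel to Lemma \ref{lemma:tildeAK+1-Ak}; the only point requiring a moment's care is the boundary case $k=0$, for which Lemma \ref{lemma:tilde-A-B-k} is stated only for $k\ge 1$, so one instead uses the explicit determinant of \eqref{eq:initial-A-B-0} recorded just before that lemma, giving $\widetilde{A}_0(z)B_0(z)-A_0(z)\widetilde{B}_0(z)=1-|\Delta_1^0|^2$ and hence the case $k=0$ directly.
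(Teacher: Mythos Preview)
Your proposal is correct and follows essentially the same route as the paper: substitute the second-row recurrences for $B_{k+1}$ and $\widetilde{B}_{k+1}$, cancel the $B_k\widetilde{B}_k$ cross-terms, and then apply Lemma \ref{lemma:tilde-A-B-k} to the surviving determinant factor. Your explicit treatment of the $k=0$ boundary case (using the determinant of \eqref{eq:initial-A-B-0} recorded just before Lemma \ref{lemma:tilde-A-B-k}) is a small clarification the paper leaves implicit.
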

\begin{proof}

It follows immediately from Lemma \ref{lemma:tilde-A-B-k} that
\begin{align*}
 &B_{k}(z) \widetilde{B}_{k+1}(z)-\widetilde{B}_{k}(z) B_{k+1}(z)  \\
 &=B_{k}(z)\{T_{-z_{k+1}}(z) \Delta_{k+2}^{k+1} \widetilde{A}_{k} (z)
 +\widetilde{B}_{k} (z)\}
 -\widetilde{B}_{k}(z)\{T_{-z_{k+1}}(z)\Delta_{k+2}^{k+1}  A_{k} (z)+  B_{k} (z)\}\\
 &=T_{-z_{k+1}}(z)\Delta_{k+2}^{k+1}\{\widetilde{A}_{k}(z){B}_{k} (z)-{A}_{k}(z)\widetilde{B}_{k}(z)\}=\Delta_{k+2}^{k+1}\prod_{\ell=0}^{k}\{T_{-z_{\ell+1}}(z) (1-|\Delta_{\ell+1}^{\ell}|^2)\}.
\end{align*}
\end{proof}

\begin{lemma}\label{lemma:relation-between-B-k-A-k}
For $k=0, \ldots , n $, the inequality
\begin{equation}\label{ineq:relation-between-B-k-A-k}
|B_k(z)|^2 - | A_k (z)|^2
\geq \prod_{\ell=0}^k (1-|\Delta_{\ell+1}^{\ell}|^2)
\end{equation}
holds for $z\in \overline{\mathbb D}$.
\end{lemma}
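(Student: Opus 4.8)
The plan is to prove \eqref{ineq:relation-between-B-k-A-k} by induction on $k$, tracking the indefinite form $|B_k(z)|^2-|A_k(z)|^2$ along the transfer matrices of \eqref{eq:A-B-k-recurrence-formula}. For the base case $k=0$, the initial data \eqref{eq:initial-A-B-0} gives $|B_0(z)|^2-|A_0(z)|^2=1-|\Delta_1^0|^2$ identically in $z$, so \eqref{ineq:relation-between-B-k-A-k} holds with equality and the product on the right-hand side is just $1-|\Delta_1^0|^2$.

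For the inductive step, I would assume \eqref{ineq:relation-between-B-k-A-k} for some $k\in\{0,\ldots,n-1\}$ and all $z\in\overline{\mathbb D}$. Abbreviating $t:=T_{-z_{k+1}}(z)$ and $\delta:=\Delta_{k+2}^{k+1}$, the recurrence \eqref{eq:A-B-k-recurrence-formula} reads
$$A_{k+1}=tA_k+\overline{\delta}\,B_k,\qquad B_{k+1}=t\delta A_k+B_k.$$
Expanding the two moduli, the cross terms appearing in $|B_{k+1}|^2$ and in $|A_{k+1}|^2$ both equal $2\,\mathrm{Re}\bigl(t\delta A_k\overline{B_k}\bigr)$ and hence cancel upon subtraction, leaving the exact identity
$$|B_{k+1}(z)|^2-|A_{k+1}(z)|^2=\bigl(1-|\Delta_{k+2}^{k+1}|^2\bigr)\Bigl(|B_k(z)|^2-|T_{-z_{k+1}}(z)|^2\,|A_k(z)|^2\Bigr).$$
Equivalently, with $J=\mathrm{diag}(-1,1)$ the transfer matrix $M=\left(\begin{smallmatrix}t&\overline{\delta}\\ t\delta&1\end{smallmatrix}\right)$ satisfies $M^{*}JM=(1-|\delta|^2)\,\mathrm{diag}(-|t|^2,1)$, so that applying $M$ rescales the form $|B|^2-|A|^2$ in exactly this controlled way.

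To finish, I would invoke that $|T_{-z_{k+1}}(z)|\le 1$ for every $z\in\overline{\mathbb D}$ (the only pole $1/\overline{z_{k+1}}$ lies outside $\overline{\mathbb D}$ since $|z_{k+1}|<1$, and the modulus equals $1$ on $\partial\mathbb D$). Hence $|B_k|^2-|t|^2|A_k|^2\ge |B_k|^2-|A_k|^2\ge\prod_{\ell=0}^k(1-|\Delta_{\ell+1}^\ell|^2)$ by the induction hypothesis, and multiplying through by the positive factor $1-|\Delta_{k+2}^{k+1}|^2$ (positive because all hyperbolic divided differences are assumed to lie in $\mathbb D$) yields $|B_{k+1}|^2-|A_{k+1}|^2\ge\prod_{\ell=0}^{k+1}(1-|\Delta_{\ell+1}^\ell|^2)$, which closes the induction. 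The only real content is the cancellation of the cross terms, or equivalently the $J$-algebra of $M$; the remaining steps are routine estimates, the one point to watch being that $|t|\le 1$ is used in the direction that keeps $|B_k|^2-|t|^2|A_k|^2$ above $|B_k|^2-|A_k|^2$.
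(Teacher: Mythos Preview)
Your proof is correct and follows essentially the same route as the paper: induction on $k$, with the base case read off from \eqref{eq:initial-A-B-0}, and the inductive step obtained by expanding $|B_{k+1}|^2-|A_{k+1}|^2$ via \eqref{eq:A-B-k-recurrence-formula} to reach the identity $(1-|\Delta_{k+2}^{k+1}|^2)\bigl(|B_k|^2-|T_{-z_{k+1}}(z)|^2|A_k|^2\bigr)$, then using $|T_{-z_{k+1}}(z)|\le 1$ on $\overline{\mathbb D}$ together with the induction hypothesis. Your $J$-form remark is a pleasant repackaging of the same cancellation, not a different argument.
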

\begin{proof}
We prove this lemma by induction.
When $k=0$,  it follows from \eqref{eq:initial-A-B-0} that $|B_0(z)|^2-|A_0(z)|^2 = 1-|\Delta_1^0|^2$.
Assume \eqref{ineq:relation-between-B-k-A-k} holds for $k \geq 0$,
then for $|z| \leq 1$,  and  \eqref{eq:A-B-k-recurrence-formula} we have
\begin{align*}
   & |B_{k+1}(z)|^2  - |A_{k+1}(z)|^2\\
   =&
   |\Delta_{k+2}^{k+1} T_{-z_{k+1}}(z) A_k(z) + B_k(z) |^2
   -
   |T_{-z_{k+1}}(z) A_k(z) + \overline{\Delta_{k+2}^{k+1} }B_k(z)|^2
\\
  =&
   (1-|\Delta_{k+2}^{k+1}|^2) (|B_k(z)|^2  - |T_{-z_{k+1}}(z)|^2 |A_k(z)|^2 )
\\
  \geq&
   (1-|\Delta_{k+2}^{k+1}|^2) (|B_k(z)|^2  - |A_k(z)|^2 )
\\
  \geq&
   (1-|\Delta_{k+2}^{k+1}|^2) \prod_{\ell=0}^k (1-|\Delta_{\ell+1}^{\ell} |^2 )
   = \prod_{\ell=0}^{k+1} (1-|\Delta_{\ell+1}^{\ell} |^2 ) .
\end{align*}
\end{proof}

\begin{lemma}
For $k=0, \ldots , n$, the inequality
\begin{equation*}\label{ineq:tildeB-and-B-k}
|\widetilde{B}_k (z)| < |B_k(z)|
\end{equation*}
holds for $z\in \overline{\mathbb D}$.
\label{lemma:tildeB-and-B-k}
\end{lemma}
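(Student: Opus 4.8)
The plan is to compare $|\widetilde{B}_k|$ with $|B_k|$ first on the unit circle, where the boundary behaviour of the Blaschke factors $T_{-z_\ell}$ trivialises the comparison, and then to propagate the strict inequality to the whole closed disk by the maximum modulus principle.

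First I would dispose of $k=0$ directly: from \eqref{eq:initial-A-B-0} one has $\widetilde{B}_0(z)\equiv\Delta_1^0$ and $B_0(z)\equiv 1$, so $|\widetilde{B}_0(z)|=|\Delta_1^0|<1=|B_0(z)|$. For $k\ge 1$, fix $z$ with $|z|=1$. Then $1/\overline{z}=z$ and $|T_{-z_\ell}(z)|=1$ for each $\ell$, so the second identity of \eqref{eq:tildeAk_tildeBk} in Lemma \ref{lemma:relation_between_A-k_B-k} gives
$$|\widetilde{B}_k(z)|=\bigl|\overline{A_k(1/\overline{z})}\bigr|\prod_{\ell=1}^{k}|T_{-z_\ell}(z)|=|A_k(z)|.$$
On the other hand, Lemma \ref{lemma:relation-between-B-k-A-k} yields $|B_k(z)|^2-|A_k(z)|^2\ge\prod_{\ell=0}^{k}(1-|\Delta_{\ell+1}^{\ell}|^2)>0$, the strict positivity coming from the standing hypothesis $\Delta_j^k\in\mathbb{D}$. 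Combining the two displays, $|\widetilde{B}_k(z)|<|B_k(z)|$ for every $z\in\partial\mathbb{D}$. (Alternatively, the sharper identity $|B_k(z)|^2-|\widetilde{B}_k(z)|^2=\prod_{\ell=0}^{k}(1-|\Delta_{\ell+1}^{\ell}|^2)$ on $\partial\mathbb{D}$ can be read off the unlabelled lemma following Lemma \ref{lemma:tilde-A-B-k}.)

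Next I would pass to the interior. By construction \eqref{eq:initial-A-B-0}--\eqref{eq:A-B-k-recurrence-formula}, the functions $A_k,B_k,\widetilde{A}_k,\widetilde{B}_k$ are rational with poles only among the points $1/\overline{z_1},\dots,1/\overline{z_k}$, all of modulus greater than $1$; hence they are holomorphic on a neighbourhood of $\overline{\mathbb{D}}$. Moreover, Lemma \ref{lemma:relation-between-B-k-A-k} gives $|B_k(z)|^2\ge|A_k(z)|^2+\prod_{\ell=0}^{k}(1-|\Delta_{\ell+1}^{\ell}|^2)>0$ on $\overline{\mathbb{D}}$, so $B_k$ is zero-free there and $g:=\widetilde{B}_k/B_k$ is holomorphic on a neighbourhood of $\overline{\mathbb{D}}$. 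Since $|g|$ is continuous on the compact circle and, by the previous step, takes only values strictly less than $1$ there, $M:=\max_{\partial\mathbb{D}}|g|<1$; the maximum modulus principle then gives $|g(z)|\le M<1$ for all $z\in\overline{\mathbb{D}}$, that is, $|\widetilde{B}_k(z)|<|B_k(z)|$ on $\overline{\mathbb{D}}$, as desired.

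The only point requiring real care is the bookkeeping behind ``holomorphic on a neighbourhood of $\overline{\mathbb{D}}$'': one must check that the recursion \eqref{eq:A-B-k-recurrence-formula} introduces no poles inside $\overline{\mathbb{D}}$, which is immediate since its only non-constant entries are the M\"obius maps $T_{-z_{k+1}}(z)$ whose poles sit at $1/\overline{z_{k+1}}\notin\overline{\mathbb{D}}$, and that $B_k$ genuinely has no zero on $\overline{\mathbb{D}}$, which is exactly the content of Lemma \ref{lemma:relation-between-B-k-A-k}. Everything else is routine.
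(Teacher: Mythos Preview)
Your proposal is correct and follows essentially the same route as the paper's proof: reduce the boundary comparison to $|\widetilde{B}_k(z)|=|A_k(z)|$ via Lemma~\ref{lemma:relation_between_A-k_B-k}, invoke Lemma~\ref{lemma:relation-between-B-k-A-k} for the strict inequality on $\partial\mathbb{D}$ and for the zero-freeness of $B_k$, and then apply the maximum modulus principle to $\widetilde{B}_k/B_k$. Your version is slightly more explicit about the pole locations and the case $k=0$, but the argument is the same.
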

\begin{proof}
By Lemma \ref{lemma:relation-between-B-k-A-k}, the function $B_k(z)$ has no zeros on $\overline{\mathbb D}$.
Hence  $\widetilde{B}_k(z)/B_k(z)$ is analytic on $\overline{\mathbb D}$. For $|z|=1$,  using
Lemmas \ref{lemma:relation_between_A-k_B-k} and \ref{lemma:relation-between-B-k-A-k} we have
\begin{align*}
   |B_k(z)|^2 - |\widetilde{B}_k(z)|^2
    &=|B_k(z)|^2 - |\prod_{\ell=1}^k[T_{-z_{\ell}}(z)] \overline{A_k (1/\overline{z})}|^2\\
    &= |B_k(z)|^2 - |A_k(z)|^2
    \geq \prod_{\ell=0}^k (1-|\Delta_{\ell+1}^{\ell} |^2 ) > 0 .
\end{align*}
Thus we have $|\widetilde{B}_k(z)/B_k(z)| < 1$ on $\partial {\mathbb D}$,
and hence by the maximum modulus principle for analytic functions,
$|\widetilde{B}_k(z)/B_k(z)| < 1$ holds on $\overline{\mathbb D}$.
\end{proof}

\begin{lemma}\label{lem:interior_point}
Let $z \in {\mathbb D} \backslash \{ z_1,\ldots,z_{n+1} \}$.
Then $f_{\varepsilon}(z)|_{\varepsilon=0}$
is  an interior point of the variability region of $f(z)$.
\end{lemma}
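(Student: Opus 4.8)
The plan is the following: for a fixed $z$, the map $\varepsilon\mapsto f_{\varepsilon}(z)$ is a non-degenerate Möbius transformation, so it carries the interior point $\varepsilon=0$ of the parameter disk to an interior point of its image, and that image is contained in the variability region of $f(z)$.

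First I would use the explicit form \eqref{eq:extremal-f-varepsilon} from Lemma \ref{lemma:Schur_polynomial_and_Caratheodoy_problem}: for fixed $z$,
\[
f_{\varepsilon}(z)=\frac{a\varepsilon+b}{c\varepsilon+d},\qquad
a=T_{-z_{n+1}}(z)\widetilde{A}_n(z),\quad b=\widetilde{B}_n(z),\quad c=T_{-z_{n+1}}(z)A_n(z),\quad d=B_n(z),
\]
which exhibits $\varepsilon\mapsto f_{\varepsilon}(z)$ as a Möbius transformation whose coefficients depend on the fixed point $z$. The key computation is its cross-determinant: by Lemma \ref{lemma:tilde-A-B-k} with $k=n$,
\[
ad-bc=T_{-z_{n+1}}(z)\bigl(\widetilde{A}_n(z)B_n(z)-A_n(z)\widetilde{B}_n(z)\bigr)
=\prod_{\ell=1}^{n+1}T_{-z_{\ell}}(z)\prod_{\ell=0}^{n}\bigl(1-|\Delta_{\ell+1}^{\ell}|^2\bigr).
\]
Since $z\in{\mathbb D}$ is distinct from each of $z_1,\ldots,z_{n+1}$, every factor $T_{-z_{\ell}}(z)$ is a nonzero finite number, and by the standing assumption of this section $|\Delta_{\ell+1}^{\ell}|<1$; hence $ad-bc\neq0$, so $\varepsilon\mapsto f_{\varepsilon}(z)$ is a genuine Möbius transformation, a homeomorphism of $\widehat{\C}$ onto itself.

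A non-degenerate Möbius transformation is an open map, so the image of the open disk ${\mathbb D}$ under $\varepsilon\mapsto f_{\varepsilon}(z)$ is an open subset of $\widehat{\C}$ containing $f_{\varepsilon}(z)|_{\varepsilon=0}$. Because $f_{\varepsilon}\in{\mathcal S}$ for $\varepsilon\in{\mathbb D}$, we have $|f_{\varepsilon}(z)|\le1$, so this image lies in $\overline{\mathbb D}$, omits $\infty$, and is therefore open in ${\mathbb C}$ in the usual sense. (One may avoid the Riemann sphere: using Lemma \ref{lemma:relation-between-B-k-A-k} together with $|T_{-z_{n+1}}(z)|<1$ one checks that the pole $-d/c$ of the Möbius map lies outside $\overline{\mathbb D}$, so the map is holomorphic and nonconstant on a neighbourhood of $\overline{\mathbb D}$, hence open on ${\mathbb D}$.) Since each $f_{\varepsilon}$ satisfies $f_{\varepsilon}(z_j)=w_j$ for $j=1,\ldots,n+1$, the image $\{f_{\varepsilon}(z):\varepsilon\in{\mathbb D}\}$ lies in the variability region of $f(z)$; being an open neighbourhood of $f_{\varepsilon}(z)|_{\varepsilon=0}$ contained in that region, it witnesses that $f_{\varepsilon}(z)|_{\varepsilon=0}$ is an interior point.

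I expect the only delicate step to be the non-vanishing of $ad-bc$, which is exactly where the hypothesis $z\notin\{z_1,\ldots,z_{n+1}\}$ is needed (to keep the Blaschke factors $T_{-z_{\ell}}(z)$ away from $0$), together with $|\Delta_j^k|<1$; the remaining points — openness of non-degenerate Möbius maps and the containment of the subfamily $\{f_{\varepsilon}\}$ in the variability region — are formal.
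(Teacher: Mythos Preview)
Your proposal is correct and follows essentially the same approach as the paper. The paper computes the derivative $\psi(z)=\partial_\varepsilon f_\varepsilon(z)|_{\varepsilon=0}=(ad-bc)/d^2$ and shows it is nonzero via Lemma~\ref{lemma:tilde-A-B-k}, then invokes openness of nonconstant analytic maps; you compute the same cross-determinant $ad-bc$ via the same lemma and invoke openness of non-degenerate M\"obius maps, which amounts to the same argument.
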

\begin{proof}
For fixed $z\in {\mathbb D}$, note that $f_{\varepsilon }(z)$ defined by (\ref{def:extremal-f})
is an analytic function of $\varepsilon \in \overline{\mathbb D}$. Since a non-constant analytic function is an open map,
in order to prove the lemma it is sufficient to show that $f_{\varepsilon }(z)$ is a non-constant function of $\varepsilon$.

Let
$$
   \psi(z) =
   \left.
   \frac{\partial }{\partial \varepsilon} \left\{ f_{\varepsilon}(z ) \right\}
   \right|_{\varepsilon=0} .
$$
Then the problem reduces to showing  that $\psi(z) \not= 0$ for $z \in {\mathbb D} \backslash \{z_1,\ldots,z_{n+1} \}$.
By (\ref{eq:extremal-f-varepsilon}) and Lemma \ref{lemma:tilde-A-B-k}
we have
\begin{align*}
\psi(z)&=\frac{T_{-z_{n+1}}(z) \left\{ \widetilde{A}_n( z ) B_n( z ) - \widetilde{B}_n( z ) A_n( z ) \right\}}{B_n( z )^2}\\
&=\frac{\prod_{\ell=0}^n\{T_{-z_{\ell+1}}(z)(1-|\Delta_{\ell+1}^{\ell}|^2)\}}{B_n( z )^2} .
\end{align*}
We note that
  Lemma \ref{lemma:tildeB-and-B-k} implies
 $|B_n( z )|>0$, and so $\psi(z) $  has no zeros in ${\mathbb D} \backslash \{z_1,\ldots,z_{n+1} \}$.
\end{proof}

We now are able to state our first main result of this paper as follows.
\begin{theorem}[Multi-Point Schwarz-Pick Lemma]\label{thm:Main-Schwarz-Pick-theorem}
Fix pairwise distinct points $z_1,\ldots,z_{n+1}\in \mathbb{D}$ and corresponding interpolation values $w_1,\ldots,w_{n+1}\in \overline{\mathbb{D}}$.
Let all the hyperbolic divided differences
$\Delta_j^k \in {\mathbb D}$ for $0\le k<j\le n+1$.
Suppose that $f\in\ss$ such that
$f(z_j)=w_j$ for $0\le j\le n+1$.
Then for each fixed $z \in {\mathbb D} \backslash \{z_1,\ldots,z_{n+1} \}$,
 the region of values of $f(z)$ is the closed disk
$$ \overline{\D}(c(z),\rho(z))
  = \{  f_{\varepsilon}(z ) : \varepsilon \in \overline{\mathbb D} \},
$$
where
\begin{equation}
\left\{
\begin{aligned}
c (z) &=
   \frac{\overline{B_n(z)} \widetilde{B}_n(z) -|T_{-z_{n+1}}(z)|^2 \overline{A_n(z)} \widetilde{A}_n(z)}{ |B_n(z)|^2 -|T_{-z_{n+1}}(z)|^2 | A_n(z)|^2 } ,\\[2mm]
\rho(z)
    &=
     \frac{\prod_{k=0}^n \{|T_{-z_{k+1}}(z)|(1-|\Delta_{k+1}^k|^2)\} }{ |B_n(z)|^2 -|T_{-z_{n+1}}(z)|^2 | A_n(z)|^2 }.
\end{aligned}
\right.
\end{equation}
Furthermore
$$
 f(z)\in \partial \D(c(z),\rho(z))
$$
if and only if, $f \equiv f_{\varepsilon }$ for $\varepsilon \in \partial {\mathbb D}$.
\end{theorem}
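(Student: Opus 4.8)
Here is the plan of proof, relying on the representation from Lemma~\ref{lemma:Schur_polynomial_and_Caratheodoy_problem} together with Lemmas~\ref{lemma:tilde-A-B-k} and~\ref{lemma:relation-between-B-k-A-k}.

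The plan is to realise the variability region as the image of the closed unit disk under a single non-degenerate M\"obius transformation. Fix $z\in{\mathbb D}\setminus\{z_1,\ldots,z_{n+1}\}$ and write $a=T_{-z_{n+1}}(z)\widetilde A_n(z)$, $b=\widetilde B_n(z)$, $c=T_{-z_{n+1}}(z)A_n(z)$, $d=B_n(z)$. By Lemma~\ref{lemma:Schur_polynomial_and_Caratheodoy_problem}, $f\mapsto f^*$ is a bijection from $\{f\in{\mathcal S}:f(z_j)=w_j\}$ onto ${\mathcal S}$, and $f(z)=M(f^*(z))$ with $M(\omega)=(a\omega+b)/(c\omega+d)$. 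Since every constant of modulus at most $1$ belongs to ${\mathcal S}$, the set $\{f^*(z):f^*\in{\mathcal S}\}$ is exactly $\overline{\mathbb D}$, so the region of values of $f(z)$ equals $M(\overline{\mathbb D})$. To see that $M$ is a genuine M\"obius map with no pole on $\overline{\mathbb D}$, I would note that $|T_{-z_{n+1}}(z)|<1$ and, by Lemma~\ref{lemma:relation-between-B-k-A-k}, $|B_n(z)|^2-|A_n(z)|^2\ge\prod_{\ell=0}^n(1-|\Delta_{\ell+1}^\ell|^2)>0$, whence $|c\omega|\le|A_n(z)|<|B_n(z)|=|d|$ for all $\omega\in\overline{\mathbb D}$, so $c\omega+d\neq0$ there; moreover $ad-bc=T_{-z_{n+1}}(z)\{\widetilde A_n(z)B_n(z)-A_n(z)\widetilde B_n(z)\}=\prod_{\ell=1}^{n+1}T_{-z_\ell}(z)\prod_{\ell=0}^n(1-|\Delta_{\ell+1}^\ell|^2)$ by Lemma~\ref{lemma:tilde-A-B-k}, which is nonzero because $z\notin\{z_1,\ldots,z_{n+1}\}$. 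Hence $M$ maps $\overline{\mathbb D}$ homeomorphically onto a closed disk, carrying $\partial{\mathbb D}$ onto its boundary circle and ${\mathbb D}$ onto its interior; by~\eqref{eq:extremal-f-varepsilon} this closed disk is precisely $\{f_\varepsilon(z):\varepsilon\in\overline{\mathbb D}\}$.

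Next I would compute the center and radius of $M(\overline{\mathbb D})$. Because $\infty$ and the center of the image circle are symmetric with respect to that circle, the center is the $M$-image of the point symmetric to the pole $-d/c$ relative to $\partial{\mathbb D}$, i.e. of $-\bar c/\bar d$; a direct computation gives $c(z)=M(-\bar c/\bar d)=(b\bar d-a\bar c)/(|d|^2-|c|^2)$, and substituting the values of $a,b,c,d$ yields exactly the stated formula for $c(z)$. For the radius, evaluating $|M(\omega)-c(z)|$ at any $\omega$ with $|\omega|=1$ gives $\rho(z)=|ad-bc|/(|d|^2-|c|^2)$; inserting the value of $ad-bc$ found above produces the asserted expression for $\rho(z)$, and $\rho(z)>0$ since each $|T_{-z_{k+1}}(z)|>0$ for $z\neq z_{k+1}$ (this also reproves Lemma~\ref{lem:interior_point}).

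Finally, for the boundary statement: since $M$ is a homeomorphism of $\overline{\mathbb D}$ onto $\overline{\mathbb D}(c(z),\rho(z))$ sending boundary to boundary, $f(z)\in\partial{\mathbb D}(c(z),\rho(z))$ holds if and only if $|f^*(z)|=1$; as $z$ is an interior point of ${\mathbb D}$, the maximum modulus principle forces $f^*\equiv\varepsilon$ for some $\varepsilon\in\partial{\mathbb D}$, and by the ``in particular'' part of Lemma~\ref{lemma:Schur_polynomial_and_Caratheodoy_problem} this is equivalent to $f\equiv f_\varepsilon$; the converse is immediate since $f_\varepsilon(z)=M(\varepsilon)$ lies on the boundary circle when $|\varepsilon|=1$. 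The only delicate point is the verification that $M$ is pole-free and non-degenerate on $\overline{\mathbb D}$ --- which is exactly where Lemmas~\ref{lemma:relation-between-B-k-A-k} and~\ref{lemma:tilde-A-B-k} enter --- after which everything reduces to routine M\"obius algebra.
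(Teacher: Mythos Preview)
Your proposal is correct and follows essentially the same strategy as the paper: both rely on the parametrisation from Lemma~\ref{lemma:Schur_polynomial_and_Caratheodoy_problem} to realise the value region as the image of $\overline{\mathbb D}$ under a single M\"obius map, invoke Lemma~\ref{lemma:tilde-A-B-k} for the determinant $ad-bc$, and finish the boundary characterisation with the maximum modulus principle. The only difference is cosmetic: the paper extracts the centre and radius by rewriting $|f^*(z)|\le 1$ as an Apollonius-type inequality and applying Lemma~\ref{lem:disk}, whereas you compute them directly via the symmetric point of the pole --- and you are in fact a bit more explicit than the paper about why the denominator $|B_n(z)|^2-|T_{-z_{n+1}}(z)|^2|A_n(z)|^2$ is positive, via Lemma~\ref{lemma:relation-between-B-k-A-k}.
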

To prove Theorem \ref{thm:Main-Schwarz-Pick-theorem} we need the following elementary lemma.
\begin{lemma}[\cite{Titchmarch}]
\label{lem:disk}
Let $p,q\in\mathbb{C}$, $k\neq 1$. The set $\{\lambda\in\mathbb{C}{:}|(\lambda-p)/(\lambda-q)|\leq k\}$ is the closed disk $\overline{\D}(c,\rho)$ with center and radius given by
$$
c=\frac{p-k^2q}{1-k^2},\quad \rho=\frac{|p-q|}{|1-k^2|}k.
$$
\end{lemma}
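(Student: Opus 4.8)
The plan is to reduce the rational inequality defining the set to a single real-quadratic inequality in $\lambda$ and $\overline\lambda$, and then complete the square. First I would note that the point $\lambda=q$ makes the quotient infinite, so it lies in the set only in the degenerate case $p=q$. For $\lambda\ne q$ the defining condition $|(\lambda-p)/(\lambda-q)|\le k$ is equivalent, after clearing the denominator and squaring the nonnegative quantities, to the polynomial inequality
\[
|\lambda-p|^2\le k^2\,|\lambda-q|^2,
\]
which is valid for every $\lambda$ and which handles $\lambda=q$ correctly (it reduces there to $|q-p|^2\le 0$).

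Next I would expand each side via $|\lambda-p|^2=(\lambda-p)(\overline\lambda-\overline p)$, collect the coefficients of $|\lambda|^2$, of $\lambda$, of $\overline\lambda$, and the constant term, and rewrite the inequality as
\[
(1-k^2)|\lambda|^2-\overline{(p-k^2q)}\,\lambda-(p-k^2q)\,\overline\lambda+\bigl(|p|^2-k^2|q|^2\bigr)\le 0.
\]
Since $k\ne 1$, the leading coefficient $1-k^2$ is nonzero. In the regime $k<1$ relevant to our applications (where $k$ is a pseudohyperbolic distance $|T_{-z_{n+1}}(z)|<1$), we have $1-k^2>0$, so dividing through preserves the direction of the inequality and puts it in the normalized form $|\lambda|^2-\overline c\,\lambda-c\,\overline\lambda+d\le 0$ with $c=(p-k^2q)/(1-k^2)$, which is precisely the asserted center.

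Completing the square then gives $|\lambda-c|^2\le |c|^2-d$ with $d=(|p|^2-k^2|q|^2)/(1-k^2)$, exhibiting the set as the closed disk $\overline{\D}(c,\rho)$ of radius $\rho=\sqrt{|c|^2-d}$. The one genuine computation is to check that $|c|^2-d$ collapses to $k^2|p-q|^2/(1-k^2)^2$, and I expect the only (mild) obstacle to be the bookkeeping of the cross terms: $|c|^2$ contributes $|p|^2-k^2(\overline pq+p\overline q)+k^4|q|^2$ over $(1-k^2)^2$, while $d$ contributes $(|p|^2-k^2|q|^2)(1-k^2)$ over the same denominator, and after the $|p|^2$ and $k^4|q|^2$ terms cancel the numerator becomes $k^2(|p|^2-\overline pq-p\overline q+|q|^2)=k^2|p-q|^2$. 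Taking the nonnegative square root and writing $\sqrt{(1-k^2)^2}=|1-k^2|$ yields $\rho=k|p-q|/|1-k^2|$, as claimed; that $\rho^2\ge 0$ confirms the locus is a genuine (possibly degenerate) disk. I would remark that the absolute value in the radius is written so the formula persists for $k>1$ as well, the only change being that the sign reversal on dividing by $1-k^2$ then turns the set into the exterior of the same circle, so that the literal ``closed disk'' conclusion is the case $k<1$ used throughout.
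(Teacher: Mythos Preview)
Your argument is correct: clearing denominators, expanding, dividing by $1-k^2$, and completing the square is exactly the standard derivation, and your bookkeeping of $|c|^2-d=k^2|p-q|^2/(1-k^2)^2$ is right. Note, however, that the paper does not actually prove this lemma; it is simply quoted from Titchmarsh's \emph{The Theory of Functions} as an elementary fact, so there is no in-paper proof to compare against. Your closing remark that the ``closed disk'' conclusion literally holds only for $0\le k<1$ (the case used in the application, where $k=|T_{-z_{n+1}}(z)|<1$) while for $k>1$ the locus is the exterior of the same circle, is a valid observation about the lemma as stated.
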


\begin{proof}[\textbf{Proof of Theorem \ref{thm:Main-Schwarz-Pick-theorem}}]
Let $f$ satisfies $f(z_k)=w_k$ for $k=1,\ldots,n+1$.
Then by Lemma  \ref{lemma:Schur_polynomial_and_Caratheodoy_problem},
there exists $f^* \in \mathcal{S}$ such that
$$
   f (z)  =
  \frac{T_{-z_{n+1}}(z) \widetilde{A}_n(z) f^* (z) + \widetilde{B}_n(z)}{T_{-z_{n+1}}(z)A_n(z)f^* (z)+B_n(z)}.
$$
Since $f^* \in \mathcal{S}$, we have
\begin{equation}\label{eq:f-region}
|T_{-z_{n+1}}(z) f^* (z)| =
    \left|  \frac{B_n(z)f (z) - \widetilde{B}_n(z) }{\widetilde{A}_n(z) - A_n(z) f (z)}  \right|
    \leq  |T_{-z_{n+1}}(z)| ,\quad z \in {\mathbb D},
\end{equation}
and equality holds if and only if $|f^* (z)|=1$.

It follows from Lemma \ref{lem:disk}
that the inequality (\ref{eq:f-region}) is equivalent to
$$
| f (z) - c (z) | \leq \rho (z) ,
$$
where
\begin{align*}
c (z) &=
   \frac{\overline{B_n(z)} \widetilde{B}_n(z) -|T_{-z_{n+1}}(z)|^2 \overline{A_n(z)} \widetilde{A}_n(z)}{ |B_n(z)|^2 -|T_{-z_{n+1}}(z)|^2 | A_n(z)|^2 } ,\\[2mm]
\rho(z) &=
    \frac{|T_{-z_{n+1}}(z)||B_n(z) \widetilde{A}_n(z) - A_n(z) \widetilde{B}_n(z)|}{ |B_n(z)|^2 -|T_{-z_{n+1}}(z)|^2 | A_n(z)|^2 }\\[2mm]
    &=
     \frac{\prod_{k=0}^n (1-|\Delta_{k+1}^k|^2)|T_{-z_{k+1}}(z)| }{ |B_n(z)|^2 -|T_{-z_{n+1}}(z)|^2 | A_n(z)|^2 }.
\end{align*}

Thus  $f (z) \in \overline{\mathbb D} ( c(z), \rho(z))$. Let $| \varepsilon | = 1$ and set
$$
f (z) = f_{\varepsilon}(z)
    = \frac{\varepsilon T_{-z_{n+1}}(z) \widetilde{A}_n(z)  + \widetilde{B}_n(z)}{\varepsilon T_{-z_{n+1}}(z)A_n(z)  + B_n (z) } .
$$
Hence by Lemma \ref{lemma:tilde-A-B-k}
 \begin{align}
&
f_{\varepsilon}(z) - c(z)
\label{eq:tilde_f-center}
\\
=&
\frac{\varepsilon  T_{-z_{n+1}}(z) \widetilde{A}_n(z)  + \widetilde{B}_n(z)}{\varepsilon T_{-z_{n+1}}(z)A_n(z) + B_n (z) }
    - \frac{\overline{B_n(z)} \widetilde{B}_n(z)-|T_{-z_{n+1}}(z) |^2 \overline{A_n(z)} \widetilde{A}_n(z)}{ |B_n(z)|^2 -|T_{-z_{n+1}}(z) |^2 | A_n(z)|^2 }
\nonumber
\\[2mm]
=&
   \frac{\varepsilon T_{-z_{n+1}}(z)  (B_n(z) \widetilde{A}_n(z) - A_n(z) \widetilde{B}_n(z) )}{ |B_n(z)|^2 -|T_{-z_{n+1}}(z) |^2 | A_n(z)|^2 }
   \cdot \frac{\; \overline{B_n(z)+ \varepsilon T_{-z_{n+1}}(z) A_n(z) } \; }{B_n(z)+ \varepsilon T_{-z_{n+1}}(z)  A_n(z)}
\nonumber
\\[2mm]
=&
   \frac{\varepsilon  \prod_{k=0}^n (1-|\Delta_{k+1}^k|^2)T_{-z_{k+1}}(z)  }{ |B_n(z)|^2 -|T_{-z_{n+1}}(z) |^2 | A_n(z)|^2 }
   \cdot \frac{|B_n(z)+ \varepsilon T_{-z_{n+1}}(z) A_n(z)|^2}{(B_n(z)+\varepsilon T_{-z_{n+1}}(z) A_n(z) )^2} .
\nonumber
\end{align}

In particular, $| f_{\varepsilon }(z) - c(z)| = \rho(z)$.
Thus for any $z \in {\mathbb D} \backslash \{ z_1,\ldots,z_{n+1}\}$, there exists $\theta \in {\mathbb R}$ such that
$f_{\varepsilon }(z) = c(z) + \rho(z)e^{i \theta }$.
\end{proof}

In particular, if we set $f_n(z)=\Delta_{n+1}^n$ in \eqref{eq:recurrence_formula_for-f}, then we obtain
$f(z)=\widetilde{B}_n(z)/{B}_n(z)$, which will be a Blaschke product of degree $n$ in the case $|\Delta_{n+1}^n|=1$.
\begin{theorem}
Fix pairwise distinct points $z_1,\ldots,z_{n+1}\in \mathbb{D}$ and corresponding interpolation values $w_1,\ldots,w_{n+1}\in \overline{\mathbb{D}}$.
Let all the hyperbolic divided differences
$\Delta_j^k \in {\mathbb D}$ for $0\le k<j\le n+1$, with the only exception that $|\Delta_{n+1}^n|=1$.
Suppose that $f\in\ss$ such that
$f(z_j)=w_j$ for $0\le j\le n+1$.
Then for each fixed $z \in {\mathbb D} \backslash \{z_1,\ldots,z_{n+1} \}$,
the set of values of $f(z)$ reduces to a set consisting of a single point
$$
w = T_{\Delta_1^0} ( T_{-z_1}(z) T_{\Delta_2^1}(\cdots T_{-z_{n-1}}(z) T_{\Delta_n^{n-1}} (\Delta_{n+1}^n T_{-z_n}(z)) \cdots ))).
$$
\end{theorem}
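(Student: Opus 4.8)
The plan is to rerun the Schur algorithm of Section~2 in terms of hyperbolic divided differences and to exploit the one boundary condition $|\Delta_{n+1}^n|=1$: it will force the last iterate of the algorithm to be a unimodular constant, and unwinding the recursion will then pin $f$ down uniquely. Concretely, starting from $\Delta^0 f:=f\in\mathcal S$ I would build $\Delta^1 f,\dots,\Delta^n f$ via the recursion \eqref{eq:Def_f_k}, i.e. $\Delta^k f=\Delta_{z_k}\big(\Delta^{k-1}f\big)$ in the notation of \eqref{eq:hdquo}. By hypothesis $|\Delta_k^{k-1}|<1$ for every $k=1,\dots,n$ (the only divided difference on $\partial\mathbb D$ is $\Delta_{n+1}^n$), so a short induction gives $\Delta^k f\in\mathcal S$ for $k=0,1,\dots,n$: if $\Delta^{k-1}f\in\mathcal S$, then $\Delta^{k-1}f(z_k)=\Delta_k^{k-1}$ has modulus $<1$, the Schwarz--Pick Lemma yields $\big|[\Delta^{k-1}f(z),\Delta^{k-1}f(z_k)]\big|\le|[z,z_k]|$, and the removable singularity at $z_k$ makes $\Delta^k f$ a Schur function. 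The same computation that precedes \eqref{eq:Delta-k-f} then gives $\Delta^k f(z_j)=\Delta_j^k$ for $j=k+1,\dots,n+1$; in particular $\Delta^n f(z_{n+1})=\Delta_{n+1}^n$.

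The crux is the next step: apply the above with $k=n$. The Schur function $\Delta^n f$ attains the value $\Delta_{n+1}^n\in\partial\mathbb D$ at the interior point $z_{n+1}\in\mathbb D$, so by the maximum modulus principle $\Delta^n f$ is the constant function $\Delta^n f(z)\equiv\Delta_{n+1}^n$ --- a value independent of the original $f$.

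It then remains to unwind the recursion. Inverting \eqref{eq:Def_f_k} --- equivalently, reading \eqref{eq:f-k-chain} with $f_k=\Delta^k f$ --- one has $\Delta^{k}f(z)=T_{\Delta_{k+1}^{k}}\big(T_{-z_{k+1}}(z)\,\Delta^{k+1}f(z)\big)$ for each $k$. Substituting $\Delta^n f\equiv\Delta_{n+1}^n$ and iterating from $k=n-1$ down to $k=0$, using $T_{-z_n}(z)\Delta_{n+1}^n=\Delta_{n+1}^n T_{-z_n}(z)$, I obtain
$$
f(z)=\Delta^0 f(z)=T_{\Delta_1^0}\Big(T_{-z_1}(z)\,T_{\Delta_2^1}\big(\cdots T_{-z_{n-1}}(z)\,T_{\Delta_n^{n-1}}\big(\Delta_{n+1}^n\,T_{-z_n}(z)\big)\cdots\big)\Big)=w .
$$
Since the right-hand side depends only on the prescribed data $z_1,\dots,z_{n+1},w_1,\dots,w_{n+1}$, every $f\in\mathcal S$ with $f(z_j)=w_j$ must coincide with this single function, so for each fixed $z\in\mathbb D\setminus\{z_1,\dots,z_{n+1}\}$ the set of values of $f(z)$ is the single point $w$; moreover, viewed as a function of $z$, $w$ is a finite Blaschke product, in agreement with the observation preceding the statement.

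I expect the main obstacle to be the induction of the first paragraph, namely verifying that the algorithm does not degenerate before the last step, i.e. that $\Delta^k f\in\mathcal S$ for every $k\le n$. This is exactly where the hypothesis ``$|\Delta_j^k|<1$ except for $\Delta_{n+1}^n$'' is used, and it is what makes $\Delta^n f$ a genuine Schur function to which the maximum modulus principle can be applied; everything after that is the routine algebraic unwinding already carried out in Section~2.
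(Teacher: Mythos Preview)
Your proposal is correct and follows essentially the same line as the paper: the paper does not give a separate proof of this theorem but observes, in the sentence immediately preceding it, that when $|\Delta_{n+1}^n|=1$ one is forced to take $f_n\equiv\Delta_{n+1}^n$ (this is exactly your maximum modulus step), and then $f(z)=\widetilde{B}_n(z)/B_n(z)$ via \eqref{eq:recurrence_formula_for-f}. Your argument is the same mechanism spelled out in full --- you make the induction $\Delta^k f\in\mathcal S$ explicit and unwind the recursion directly through the maps $T_{\Delta_{k+1}^k}\circ(T_{-z_{k+1}}(\cdot)\,\cdot\,)$ instead of through the rational functions $A_k,B_k,\widetilde A_k,\widetilde B_k$, arriving at the same nested $T$--expression.
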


It is worth pointing out that the classical Schwarz-Pick Lemma is a simple corollary of Theorem \ref{thm:Main-Schwarz-Pick-theorem} for the case in which
the image of one point in $\D$ is known.
If the images of two points in $\D$ are known, then the so-called "two-point
Schwarz-Pick Lemma" (cf. \cite[Corollary 5.5]{kaptanoglu2002refine} and \cite[Proposition]{mercer1997sharpened}), which describes the range of values of $f$ at a third point $z\in\D$, can also be  directly derived from Theorem \ref{thm:Main-Schwarz-Pick-theorem}.
\begin{customthm}{B (Two-point Schwarz-Pick Lemma)}
Let pairwise distinct points $z_1,z_2\in\D$ and corresponding interpolation values $w_1,w_2\in\D$ with $\Delta_2^1\in \overline{\D}$. Suppose that $f\in \H$ such that  $f(z_1)=w_1$ and $f(z_2)=w_2$. Let $z\in\D\setminus\{z_1,z_2\}$.
\begin{enumerate}
\item If $|\Delta_2^1|=1$, then $f(z)=T_{\Delta_1^0} ( \Delta_2^1 T_{-z_1}(z))$.
\item If $|\Delta_2^1|<1$, then
 the range of values of $f(z)$ is the closed disk $\overline{\D}(c(z),r(z))$, where
\begin{align*}
c_2 (z) &=
   \frac{\overline{B_1(z)} \widetilde{B}_1(z) -|T_{-z_2}(z)|^2 \overline{A_1(z)} \widetilde{A}_1(z)}{ |B_1(z)|^2 -|T_{-z_2}(z)|^2 | A_1(z)|^2 } ,\\[2mm]
\rho_2(z) &=
     \frac{|T_{-z_2}(z)|^{2} (1-|w_1|^2)(1-|\Delta_2^1|^2) }{ |B_1(z)|^2 -|T_{-z_2}(z)|^2 | A_1(z)|^2 },
\end{align*}
and $A_1 (z)$, $\widetilde{A}_1(z)$,
  $B_1 (z)$, $\widetilde{B}_1 (z)$ are defined in \eqref{eq:A-B-1}.
In particular, if
 $z_1=w_1=0$, then
$$
c_2(z)=\frac{1-|T_{-z_2}(z)|^2}{1-|T_{-z_2}(z)|^2|w_2/z_2|^2}\frac{zw_2}{z_2},\quad \rho_2(z)=\frac{|zT_{-z_2}(z)|(1-|w_2/z_2|^2)}{1-|T_{-z_2}(z)|^2|w_2/z_2|^2}.
$$
\end{enumerate}
\end{customthm}

\section{Schur interpolation problem and generalized Rogosinski's Lemma}
Our method used to solve Problem \ref{prob:Schur} is similar to the Schur algorithm. For our purpose, we now give a new interpretation of the hyperbolic differences $\Delta_j^k$ to involve the higher-order hyperbolic derivatives.
For $n\ge 0$, we consider the specific case when $z_1=\cdots=z_{n+1}:=z_0$, $w_1=\cdots=w_{n+1}:=w_0$. Given the numbers $\gamma_1,\ldots,\gamma_n\in \D$, we define $\Delta_j^k=\gamma_k,0\le k< j\le n+1$.
For a given Schur data $\gamma =(\gamma_0,\ldots ,\gamma_n) \in {\mathbb D}^{n+1}$, we construct the functions $\Delta^j f(z)$, $j=0,\ldots,n+1$,
\begin{equation}\label{eq:def_f_k}
\Delta^kf(z) = \frac{[\Delta^{k-1}f(z) ,\Delta^{k-1}f(z_0)]}{[z,z_0]}.
\end{equation}
where $\Delta^{n+1}f\in \mathcal{S}$ is arbitrary.
This relation between $f$ and $\Delta^{n+1}f$ is invertible. By the definition of the higher-order hyperbolic derivatives, the equations in (\ref{eq:def_f_k}) show that
$\gamma_j = H^{(j)}f(z_0)=\Delta^jf(z_0)$ for $j=1,\dots,n$. Table \ref{table:generalized-Rogosinski-Lemma} describes the higher-order hyperbolic derivatives and the functions $\Delta^jf$.

\begin{table}[h]
\caption{
Table of hyperbolic divided differences for generalized Rogosinski's Lemma}
\label{table:generalized-Rogosinski-Lemma}
\begin{tabular}{cccccccccc}
\hline \multicolumn{2}{l}{ Points of $\mathbb{D}$} & 1 & 2 & 3 & $\ldots$ & $n-1$ & $n$ & $n+1$ \\
\hline $z_0$& $\gamma_0$ & & & & & & & \\
 & & $\gamma_{1}$ & & & & & & \\
 $z_0$ & $\gamma_0$ & & $\gamma_{2}$ & & & & & \\
 & & $\gamma_{1}$ & & $\gamma_{3}$ & & & & \\
 $z_0$ & $\gamma_0$ & & $\gamma_{2}$ & & $\ddots$ & & & \\
 & & $\gamma_{1}$ & & $\gamma_{3}$ & & $\gamma_{n-1}$ & & \\
 $z_0$ & $\gamma_0$ & & $\gamma_{2}$ & & & & $\gamma_{n}$ & \\
 & & $\gamma_{1}$ & & & & $\gamma_{n-1}$ & & $\Delta^{n+1}f(z)$ \\
 $z_0$ & $\gamma_0$ & & &$\vdots$ & & & $\Delta^n f(z)$ & \\
 & & & $\vdots$ & & & $\Delta^{n-1}f(z)$ & & \\
 {$\vdots$} & $\vdots$ & $\vdots$ & & & $.\cdot$ & & & \\
 & & & & $f_3(z)$ & & & & \\
$z_0$ & $\gamma_0$ & & $\Delta^2f(z)$ & & & & & \\
 &&$\Delta^1f(z)$&\\
$z$ & $f(z)$ &  & & & & & & \\
\hline
\end{tabular}
\end{table}

We give a solution to the Schur interpolation problem and  obtain an explicit representation of $f$ in terms of $\gamma$.

\begin{theorem}\label{thm:Schur}
Let the Schur parameter $\gamma=(\gamma_0, \ldots , \gamma_n) \in \overline{{\mathbb D}}^{n+1}$, $z_0\in \mathbb{D}$ and $f\in \mathcal{S}(\gamma)$. Then
\begin{enumerate}
\item
\label{interior}
If $|\gamma_0|<1, \ldots , |\gamma_n|<1$, then all solutions to the
Schur problem with  data $\gamma$ are given by
\begin{equation*}
f (z) =
T_{\gamma_0} (T_{-z_0}(z) T_{\gamma_1}(\cdots T_{-z_0}(z) T_{\gamma_n}(T_{-z_0}(z) f^* (z)) \cdots )),
\end{equation*}
where $f^* \in \mathcal{S}$ is arbitrary.

\item
\label{boundary}
If $|\gamma_0|<1, \ldots , |\gamma_{j-1}|<1$, $|\gamma_j|=1$, $\gamma_{j+1}= \cdots = \gamma_n=0$
for some $j=0, \ldots , n$, then the Schur problem with  data $\gamma$ has the unique solution
\begin{equation*}
f (z) =
  T_{\gamma_0} (T_{-z_0}(z) T_{\gamma_1}(\cdots T_{-z_0}(z) T_{\gamma_{j-1}}(\gamma_j T_{-z_0}(z))  \cdots )),
\end{equation*}
which is a Blaschke product of degree $j$.
\item
\label{exterior}
In all other cases,
there is no solution to the Schur problem with  data $\gamma$.
\end{enumerate}
\end{theorem}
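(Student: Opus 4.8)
The argument is the classical Schur algorithm, carried out with the automorphism $T_{-z_0}$ in place of multiplication by the coordinate function; the plan is to prove the three cases together by induction on the length $n+1$ of the data. The single ``Schur step'' is the reversibility of \eqref{eq:def_f_k}, already noted in the text: when $|\gamma_0|<1$, the map $f\mapsto \Delta_{z_0}f$ is a bijection of $\{f\in\mathcal S:\,f(z_0)=\gamma_0\}$ onto $\mathcal S$, with inverse $g\mapsto T_{\gamma_0}\bigl(T_{-z_0}(\cdot)\,g(\cdot)\bigr)$. For the forward direction, $|f(z_0)|=|\gamma_0|<1$ and the maximum principle give $f\in\H$, so $\Delta_{z_0}f\in\H\subset\mathcal S$ by the Schwarz--Pick Lemma; for the inverse, $h(z)=T_{-z_0}(z)g(z)$ satisfies $|h|<1$ on $\mathbb D$ since $|T_{-z_0}|<1$ there, whence $T_{\gamma_0}\circ h\in\H$ with value $\gamma_0$ at $z_0$, and the chain rule for $\Delta_{z_0}$ (using $\Delta_0T_{\gamma_0}\equiv1$ and $[z,z_0]=T_{-z_0}(z)$) returns $\Delta_{z_0}(T_{\gamma_0}\circ h)=g$. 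Since $\Delta^kf=\Delta_{z_0}(\Delta^{k-1}f)$ forces $\Delta^kf=\Delta^{k-1}(\Delta_{z_0}f)$, one has $H^kf(z_0)=H^{k-1}(\Delta_{z_0}f)(z_0)$, so with $g=\Delta_{z_0}f$, $f\in\mathcal S(\gamma_0,\dots,\gamma_n)$ if and only if $g\in\mathcal S(\gamma_1,\dots,\gamma_n)$.

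The base case $n=0$ is immediate: $\mathcal S(\gamma_0)=\{f\in\mathcal S:\,f(z_0)=\gamma_0\}$ equals the singleton $\{f\equiv\gamma_0\}$, a degree-$0$ Blaschke product, if $|\gamma_0|=1$, and equals $\bigl\{T_{\gamma_0}(T_{-z_0}(\cdot)f^*(\cdot)):f^*\in\mathcal S\bigr\}$ if $|\gamma_0|<1$, that is, \eqref{boundary} with $j=0$ and \eqref{interior}. For the inductive step with data $\gamma=(\gamma_0,\dots,\gamma_n)$: if $|\gamma_0|=1$ then $f\equiv\gamma_0$ is forced, and by the convention governing the hyperbolic divided differences on $\partial\mathbb D$ (so that higher-order hyperbolic derivatives of a Blaschke product vanish beyond its degree) one has $H^kf(z_0)=0$ for $k\ge1$, hence $\mathcal S(\gamma)$ is nonempty exactly when $\gamma_1=\dots=\gamma_n=0$, which is \eqref{boundary} with $j=0$, and \eqref{exterior} otherwise; if $|\gamma_0|<1$, the Schur step identifies $\mathcal S(\gamma)$ bijectively with $\mathcal S(\gamma_1,\dots,\gamma_n)$ through $f=T_{\gamma_0}(T_{-z_0}(\cdot)g(\cdot))$, so applying the induction hypothesis to the shorter data $(\gamma_1,\dots,\gamma_n)$ and pushing its three outcomes through this map yields precisely the three claimed descriptions of $f$; in particular, when $(\gamma_1,\dots,\gamma_n)$ lands in \eqref{boundary} with unimodular entry $\gamma_j$ ($j\ge1$), the wrapped $f$ lands in \eqref{boundary} with the same index $j$, its Blaschke degree rising from $j-1$ to $j$.

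It remains only to confirm the degree in \eqref{boundary}: post-composition with a disk automorphism $T_\alpha$ leaves the degree of a finite Blaschke product unchanged, while multiplication by $T_{-z_0}(z)$ raises it by one, and the displayed formula builds $f$ from the degree-one function $\gamma_jT_{-z_0}(z)$ by inserting the factor $T_{-z_0}(z)$ exactly $j$ times, so $f$ is a Blaschke product of degree $j$. I expect the interior case \eqref{interior} to be routine --- it is iterated Schwarz--Pick --- and the main care to lie in the boundary analysis: verifying that a unimodular $\gamma_j$ with $\gamma_0,\dots,\gamma_{j-1}\in\mathbb D$ collapses $\Delta^jf$ to the constant $\gamma_j$, that the resulting indeterminate value $[\gamma_j,\gamma_j]$ is resolved by the stated convention and forces $\gamma_{j+1}=\dots=\gamma_n=0$, and that this $j$ coincides with the Blaschke degree in the conclusion.
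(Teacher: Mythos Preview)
Your argument is correct and rests on the same Schur step as the paper's proof: the invertibility of $f\mapsto \Delta_{z_0}f$ on $\{f\in\mathcal S:f(z_0)=\gamma_0\}$ when $|\gamma_0|<1$, together with the maximum modulus principle when $|\gamma_0|=1$. The difference is purely organizational. The paper unrolls the recursion all at once, starting from an arbitrary $f^*=f_{n+1}\in\mathcal S$ and building $f_n,\ldots,f_0$ via $f_k(z)=T_{\gamma_k}(T_{-z_0}(z)f_{k+1}(z))$, then verifying by induction on $k$ that $\Delta_{z_0}^kf_0=f_k$; case~(2) is handled by observing that $|\Delta^jf(z_0)|=1$ forces $\Delta^jf\equiv\gamma_j$. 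You instead induct on the length of the data and package the recursion as a bijection $\mathcal S(\gamma_0,\ldots,\gamma_n)\leftrightarrow\mathcal S(\gamma_1,\ldots,\gamma_n)$, which lets the three cases propagate in one stroke. Your version makes the ``if and only if'' in case~(1) explicit (the paper's written proof establishes only that each $f^*$ yields a solution and leaves the converse implicit in the inversion formula), at the cost of a slightly more delicate handling of the boundary convention at each inductive step.

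One small slip: in your degree count you say the formula ``builds $f$ from the degree-one function $\gamma_jT_{-z_0}(z)$ by inserting the factor $T_{-z_0}(z)$ exactly $j$ times,'' which as phrased would give degree $j+1$. The correct count is that the formula contains $j$ factors of $T_{-z_0}$ in total (equivalently, starting from the constant $\gamma_j$ there are $j$ steps $f_k\mapsto T_{\gamma_{k-1}}(T_{-z_0}(\cdot)f_k(\cdot))$, each raising the degree by one). This does not affect the argument.
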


\begin{proof}
(1) Let the Schur parameter $\gamma =( \gamma_0, \ldots , \gamma_n )\in \mathbb{D}^{n+1}$ , i.e.,
$|\gamma_0| < 1, \ldots , |\gamma_n| < 1$. We will show that the Schur interpolation problem has infinitely many solutions, and for any function $f^*\in \mathcal{S}$ there exists a (unique) solution $f\in \ss$ with $\Delta^{n+1}f=f^*$. The idea is to find $f$ from $\Delta^{n+1}f$ using the inversion formula from \eqref{eq:def_f_k}. To do this, we start from $f_{n+1}:=f^*\in \ss$ and define $f_n, \ldots , f_0$ inductively by
\begin{equation}\label{eq:f-k-chain}
f_{k}(z):=\big[[z,z_0]\cdot f_{k+1}(z),-\gamma_k\big]=T_{\gamma_k}(T_{-z_0}(z)f_{k+1}(z)),\quad k=0,\ldots,n.
\end{equation}
Then $f_k(z_0)=[0,-\gamma_k]=\gamma_k$ and therefore $\Delta_{z_0}f_k=f_{k+1}$.

It remains to prove that $H^kf_0(z)=\gamma_k$, for $1\leq k\leq n$.  For this purpose, we first show that $\Delta_{z_0}^kf_0(z) = f_k(z)$, for $z\in \mathbb{D} $. We use induction on $k$. For $k= 1$, we have
$$
\begin{aligned}
\Delta_{z_0}^{1}f_0(z)& =\frac{[f_0(z),\:f_0(z_0)]}{[z,z_0]}  \\
&=\frac{\big[[[z,z_0]\cdot f_1(z),-\gamma_0],\gamma_0\big]}{[z,z_0]} \\
&=f_1(z).
\end{aligned}
$$
In particular, we have $H^1f_0(z_0)=\Delta_{z_0}^1f_0(z_0)=f_1(z_0)$. Suppose that the assertion holds for some $k$, i.e., $\Delta_{z_0}^kf_0(z)=f_k(z)$ and  $H^kf_0(z_0)=\Delta_{z_0}^kf_0(z_0)=f_k(z_0)=\gamma_k$. Hence
$$
\begin{aligned}
\Delta_{z_0}^{k+1}f_0(z)& =\frac{[\Delta_{z_0}^kf_0(z),\Delta_{z_0}^kf_0(z_0)]}{[z,z_0]} \\
&=\frac{[f_k(z),f_k(z_0)]}{[z,z_0]} \\
&=\frac{\big[[[z,z_0]\cdot f_{k+1}(z),-\gamma_k],\gamma_k\big]}{[z,z_0]} \\
&=f_{k+1}(z).
\end{aligned}
$$
 Finally, for $1\leq k\leq n$, we have
$H^kf_0( z_0) =f_k(z_0)= \gamma_k$.
We now set $f:=f_0$, then $\Delta^kf(z)=f_k(z)$. Therefore, starting with $f_{n+1}\in \mathcal{S}$ we obtain the function $f_0$, which solves the interpolation problem.

\bigskip

(2) Suppose that $|\gamma_0|<1, \ldots , |\gamma_{j-1}|<1$, $|\gamma_j|=1$, $\gamma_{j+1}= \cdots = \gamma_n=0$
for some $j=0, \ldots , n$. Then $|\Delta^jf(z)|=|\gamma_j|=1$ and by the maximum modulus principle, $\Delta^jf$ is the unimodular constant $\gamma_j$. Thus, $f$ must be a Blaschke product of degree $j$. In this case, $\Delta^k f=0$ for $k>j$. Define
$$f_{k}(z):=\big[[z,z_0]\cdot f_{k+1}(z),-\gamma_k\big]=T_{\gamma_k}(T_{-z_0}(z)f_{k+1}(z)),\quad k=0,\ldots,j-1.\,$$
where $f_j=\gamma_j$. We can obtain the explicit form of $f$ from this construction, which is a unique solution to the Schur interpolation problem.

\bigskip

(3) From Case (1) and (2), we know that there exists no solution to the Schur interpolation problem in any other cases.
\end{proof}

From now on, we will assume that the Schur parameter $\gamma =( \gamma_0,\ldots , \gamma_n) \in {\mathbb D}^{n+1}$, unless otherwise stated.
In the proof of Case (1) of Theorem \ref{thm:Schur}, we shows that $f_k$ coincides with $\Delta^k f(z)$ for $j=0,\ldots,n+1$. Therefore, we use $f_k$ instead of $\Delta^k f(z)$.
By (\ref{eq:def_f_k}), we have defined $f_1, \ldots , f_{n+1}
\in \mathcal{S}$ by
$$
f_{k+1} = \frac{[f_k(z) ,f_k(z_0)]}{[z,z_0]},\quad k=0,\ldots,n.
$$
Since $\gamma_k = f_k(z_0)$, $k=0, \ldots , n$, we have
\begin{equation}\label{eq:recurrence_formula_for_f_k}
f (z ) =  f_0 (z) \quad\mbox{and}\quad
f_k (z ) = \frac{T_{-z_0}(z) f_{k+1} (z)+ \gamma_k }{1+ \overline{\gamma_k }T_{-z_0}(z) f_{k+1} (z)},\quad (k=0,1,\ldots,n).
\end{equation}

Using a similar technique in Section 2,
we define sequences of rational functions $A_k (z), B_k (z), \widetilde{A}_k (z)$ and $\widetilde{B}_k(z)$ recursively by
\begin{equation}\label{eq:initial-A-B_0}
 \begin{pmatrix}
  A_0 (z) & \widetilde{A}_0(z) \\
  B_0 (z) & \widetilde{B}_0 (z) \\
 \end{pmatrix}
 =
 \begin{pmatrix}
  \overline{\gamma_0} & 1 \\
  1 & \gamma_0 \\
 \end{pmatrix}
\end{equation}
and
\begin{equation}\label{eq:recurrence_formula}
 \begin{pmatrix}
  A_{k+1} (z) & \widetilde{A}_{k+1}(z) \\
  B_{k+1} (z) & \widetilde{B}_{k+1} (z) \\
 \end{pmatrix}
 =
 \begin{pmatrix}
  T_{-z_0}(z) & \overline{\gamma_{k+1}} \\
  \gamma_{k+1}T_{-z_0}(z) & 1 \\
 \end{pmatrix}
 \begin{pmatrix}
  A_k (z) & \widetilde{A}_k (z) \\
  B_k (z) & \widetilde{B}_k (z) \\
 \end{pmatrix} ,
\end{equation}
where $k=0,\ldots , n-1 $.
 From (\ref{eq:initial-A-B_0}) and
(\ref{eq:recurrence_formula}) it  follows easily that
\begin{equation}\label{eq:A-1}
 \begin{pmatrix}
  A_1 (z) & \widetilde{A}_1(z) \\
  B_1 (z) & \widetilde{B}_1 (z) \\
 \end{pmatrix}
 =
 \begin{pmatrix}
  \overline{\gamma_{1}}+\overline{\gamma_{0}}  T_{-z_{0}}(z) & \gamma_{0} \overline{\gamma_1}+T_{-z_0}(z) \\
  1+\overline{\gamma_0}\gamma_{1}T_{-z_{0}}(z) & \gamma_{0}+\gamma_{1}  T_{-z_{0}}(z)  \\
 \end{pmatrix},
\end{equation}
and
\begin{equation}
\left\{
\begin{aligned}
A_{2}(z)&=\overline{\gamma_2}+\left(\overline{\gamma_1}+\overline{\gamma_0} \gamma_{1} \overline{\gamma_2}\right) T_{-z_{0}}(z)+\overline{\gamma_0}T_{-z_0}(z)^{2}, \\
\widetilde{A}_2(z)&=\gamma_{0} \bar{\gamma}_{2}+\left(\gamma_{0} \overline{\gamma_{1}}+\gamma_{1} \bar{\gamma}_{2}\right)T_{-z_0}(z)+\left[T_{-z_0}(z)\right]^{2},
\\
B_{2}(z)&=1+\left(\overline{\gamma_0} \gamma_{1}+\overline{\gamma_1} \gamma_{2}\right) T_{-z_0}(z)+\overline{\gamma_0} \gamma_{2}T_{-z_{0}}(z)^{2}, \\
\widetilde{B}_2 (z)&= \gamma_{0}+\left(\gamma_{1}+\gamma_{0} \overline{\gamma_1} \gamma_{2}\right)T_{-z_{0}}(z)+\gamma_{2}\left[T_{z_0}(z)\right]^{2}.  \\
\end{aligned}
\right.
\end{equation}
Also
\begin{equation}\label{eq:B_of_zero}
B_k(z_0) = 1 , \quad \text{and} \quad  \widetilde{B}_k(z_0) = \gamma_0
\end{equation}
for $k=0,1, \ldots , n$.

By (\ref{eq:recurrence_formula_for_f_k}) and induction, we have the following recurrence formula for $f(z)$,
\begin{equation}\label{eq:recurrence_formula_for_f}
      f (z) =
      \frac{T_{-z_0}(z) \widetilde{A}_k(z) f_{k+1} (z) + \widetilde{B}_k(z)}
      {T_{-z_0}(z)A_k(z) f_{k+1} (z) + B_k(z)}
      , \quad
      k=0,1, \ldots , n .
\end{equation}
In particular, if we set $f_n(z)=\gamma_n$ in \eqref{eq:recurrence_formula_for_f}, then we obtain
$f(z)=\widetilde{B}_n(z)/{B}_n(z)$, which will be a Blaschke product of degree $n$ in the case $|\gamma_n|=1$.

For $\varepsilon \in \overline{\mathbb D}$, let
\begin{align}
f_{\gamma , \varepsilon }(z)
    =&  T_{\gamma_0} ( T_{-z_0}(z) T_{\gamma_1} ( \cdots T_{-z_0}(z) T_{\gamma_{n}} ( \varepsilon T_{-z_0}(z)) \cdots )), \quad z \in {\mathbb D}.
\label{def:extremal_f}
\end{align}
Then $f_{\gamma , \varepsilon } \in \mathcal{S}$ is a solution to the Schur problem, i.e., $f_{\gamma , \varepsilon } \in \mathcal{S}(\gamma)$.
We note that for each fixed $\varepsilon \in \overline{\mathbb D}$, $f_{\gamma,\varepsilon} (z)$ is analytic functions of $z \in {\mathbb D}$.
In particular, $f_{\gamma,\varepsilon} (z)$ is a finite Blaschke product of $z$ for $|\varepsilon | =1$.

We next give some important properties of $A_k (z), B_k (z), \widetilde{A}_k (z)$ and $\widetilde{B}_k(z)$ without detailed proofs, which are corresponding to the lemmas in Section 2.
\begin{lemma}\label{lemma:Schur_polynomial_and_Caratheodoy_problem}
Let  the Schur parameter $\gamma=(\gamma_0,\ldots,\gamma_{n})\in {\mathbb D}^{n+1}$. Then for any $f\in \mathcal S(\gamma)$, there exists a unique $f^{*}\in \mathcal S$ such that
\begin{equation}\label{eq:SChur_representation}
f (z) = \frac{T_{-z_0}(z)\widetilde{A}_n(z) f^*(z)+ \widetilde{B}_n(z)}{T_{-z_0}(z)A_n(z)f^*(z) + B_n(z)} .
\end{equation}
Conversely, for any $f^{*}\in \mathcal S$, if  $f$ is given by $(\ref{eq:SChur_representation})$, then $f\in \mathcal S(\gamma)$. In particular, the function $f_{\gamma,\varepsilon}$ defined by (\ref{def:extremal_f}) can be written as
\begin{equation}\label{eq:extremal_f_and_schur_polynomials}
f_{\gamma , \varepsilon } (z)
   = \frac{ \varepsilon T_{-z_0}(z)\widetilde{A}_n(z)+ \widetilde{B}_n(z)}{ \varepsilon T_{-z_0}(z) A_n(z) + B_n(z)},
\end{equation}
and $f_{\gamma,\varepsilon}\in{\mathcal S}(\gamma)$.
\end{lemma}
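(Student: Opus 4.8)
The plan is to mimic exactly the argument used for its multi-point analogue in Section 2, exploiting the fact that the one-point recursion \eqref{eq:recurrence_formula} is the specialization of \eqref{eq:A-B-k-recurrence-formula} in which every interpolation node $z_{k+1}$ is replaced by the single point $z_0$ and every hyperbolic divided difference $\Delta_{k+1}^k$ is replaced by $\gamma_k$. First I would establish, by induction on $k$ from $\eqref{eq:recurrence_formula_for_f_k}$, the recurrence formula $\eqref{eq:recurrence_formula_for_f}$, i.e.
\[
f(z)=\frac{T_{-z_0}(z)\widetilde{A}_k(z)f_{k+1}(z)+\widetilde{B}_k(z)}{T_{-z_0}(z)A_k(z)f_{k+1}(z)+B_k(z)},\qquad k=0,1,\dots,n;
\]
the base case $k=0$ is immediate from $\eqref{eq:initial-A-B_0}$ and the first line of $\eqref{eq:recurrence_formula_for_f_k}$, and the inductive step is a routine substitution of $f_{k+1}=T_{\gamma_{k+1}}(T_{-z_0}(z)f_{k+2}(z))$ followed by the matrix recursion $\eqref{eq:recurrence_formula}$. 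Taking $k=n$ and writing $f^*:=f_{n+1}=\Delta^{n+1}f\in\mathcal S$ (which is well defined and belongs to $\mathcal S$ by Theorem \ref{thm:Schur}(1), since $\gamma\in\mathbb D^{n+1}$) gives the representation $\eqref{eq:SChur_representation}$, and uniqueness of $f^*$ follows because the recursion $\eqref{eq:def_f_k}$ is invertible: $f^*$ is determined from $f$ by successively forming the hyperbolic quotients $f_{k+1}=[f_k(z),f_k(z_0)]/[z,z_0]$.

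For the converse, I would argue that for arbitrary $f^*\in\mathcal S$ the function $f$ defined by $\eqref{eq:SChur_representation}$ is obtained by setting $f_{n+1}:=f^*$ and running the chain $\eqref{eq:f-k-chain}$ backwards, so that $f=f_0$ with $\Delta^kf=f_k$; by the computation already carried out in the proof of Theorem \ref{thm:Schur}(1) one has $f_k(z_0)=\gamma_k$ for $k=0,\dots,n$, hence $H^kf(z_0)=\Delta_{z_0}^kf_0(z_0)=f_k(z_0)=\gamma_k$ and $f\in\mathcal S(\gamma)$. That $f\in\mathcal S$ (rather than merely meromorphic) is guaranteed by the Schwarz–Pick lemma applied at each stage of $\eqref{eq:f-k-chain}$: each $T_{\gamma_k}(T_{-z_0}(z)\,\cdot\,)$ maps $\mathcal S$ into $\mathcal S$. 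Finally, specializing $f^*(z)\equiv\varepsilon$ with $\varepsilon\in\overline{\mathbb D}$ in $\eqref{eq:SChur_representation}$ and unwinding the definition $\eqref{def:extremal_f}$ of $f_{\gamma,\varepsilon}$ yields $\eqref{eq:extremal_f_and_schur_polynomials}$, and $f_{\gamma,\varepsilon}\in\mathcal S(\gamma)$ then follows from the converse direction just proved.

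I do not anticipate a genuine obstacle here: the statement is the verbatim one-point degeneration of Lemma \ref{lemma:Schur_polynomial_and_Caratheodoy_problem} of Section 2, and every ingredient — the recurrence $\eqref{eq:recurrence_formula_for_f}$, the invertibility of the Schur step, and the interpolation identities $f_k(z_0)=\gamma_k$ — is either already proved or an exact transcription of a Section 2 computation. The only point requiring a word of care is that, because all nodes collapse to $z_0$, the numerators and denominators in $\eqref{eq:SChur_representation}$ carry a factor $T_{-z_0}(z)$ of multiplicity up to $n$ rather than the product $\prod_{\ell}T_{-z_\ell}(z)$ of distinct factors; this affects the degrees of $A_n,\widetilde A_n,B_n,\widetilde B_n$ and the locations of common zeros, but not the algebraic identity itself, so the proof goes through unchanged. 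For brevity I would simply remark that the proof is parallel to that of the corresponding lemma in Section 2 and indicate the two or three substitutions needed, rather than reproducing the induction in full.
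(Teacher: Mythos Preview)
Your plan is correct and matches the paper's own treatment: the paper states this lemma without a separate proof, remarking that it is the one-point specialization of the Section~2 lemma and follows from the recurrence \eqref{eq:recurrence_formula_for_f} with $f^*=f_{n+1}$, which is exactly the induction you outline. Your added care about the converse via Theorem~\ref{thm:Schur}(1) and the specialization $f^*\equiv\varepsilon$ is entirely in line with (and slightly more explicit than) what the paper does.
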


\begin{lemma}\label{lemma:relation_between_Moebius_coefficients}
For $k=0, \ldots , n$,
\begin{align*}
  \widetilde{A}_k (z) &= [T_{-z_0}(z)]^k \overline{B_k (1/\overline{z})} ,
   \quad
   \widetilde{B}_k (z) = [T_{-z_0}(z)]^k \overline{A_k (1/\overline{z})},\\
{A}_k (z) &= [T_{-z_0}(z)]^k\overline{\widetilde{B}_k (1/\overline{z})},
   \quad
   B_k (z) =[T_{-z_0}(z)]^k \overline{\widetilde{A}_k (1/\overline{z})}.
\end{align*}
\end{lemma}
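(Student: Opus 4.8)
The plan is to observe that this lemma is simply Lemma \ref{lemma:relation_between_A-k_B-k} specialized to the confluent data $z_1=\cdots=z_{n+1}=z_0$, $\Delta_j^k=\gamma_k$, in which every product $\prod_{\ell=1}^k T_{-z_\ell}(z)$ degenerates into the single power $[T_{-z_0}(z)]^k$; so in principle one could just quote that result. To keep the argument self-contained I would instead run the short induction on $k$ directly from the recurrence \eqref{eq:recurrence_formula}, mirroring the proof of Lemma \ref{lemma:relation_between_A-k_B-k} verbatim with the products replaced by powers.

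First, the base case $k=0$ is immediate from \eqref{eq:initial-A-B_0}: all of $A_0,B_0,\widetilde A_0,\widetilde B_0$ are constants, with $\widetilde A_0(z)=1=\overline{B_0(1/\overline{z})}$ and $\widetilde B_0(z)=\gamma_0=\overline{A_0(1/\overline{z})}$, while the prefactor $[T_{-z_0}(z)]^0=1$ is trivial. For the inductive step I would assume the two identities $\widetilde A_k(z)=[T_{-z_0}(z)]^k\overline{B_k(1/\overline{z})}$ and $\widetilde B_k(z)=[T_{-z_0}(z)]^k\overline{A_k(1/\overline{z})}$, expand $\widetilde A_{k+1}$ and $\widetilde B_{k+1}$ from the rows of \eqref{eq:recurrence_formula}, namely $\widetilde A_{k+1}=T_{-z_0}(z)\widetilde A_k+\overline{\gamma_{k+1}}\,\widetilde B_k$ and $\widetilde B_{k+1}=\gamma_{k+1}T_{-z_0}(z)\widetilde A_k+\widetilde B_k$, substitute the hypotheses, factor out $[T_{-z_0}(z)]^{k+1}$, and convert the leftover factor $[T_{-z_0}(z)]^{-1}$ into $\overline{T_{-z_0}(1/\overline{z})}$ by the identity $\overline{T_{-z_0}(1/\overline{z})}=1/T_{-z_0}(z)$. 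The bracket that remains is then exactly $\overline{B_{k+1}(1/\overline{z})}$ (resp.\ $\overline{A_{k+1}(1/\overline{z})}$), by the first column of \eqref{eq:recurrence_formula} read off at the point $1/\overline{z}$, which closes the induction for the first two formulas.

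Finally, the remaining identities $A_k(z)=[T_{-z_0}(z)]^k\overline{\widetilde B_k(1/\overline{z})}$ and $B_k(z)=[T_{-z_0}(z)]^k\overline{\widetilde A_k(1/\overline{z})}$ should follow from the first two without a fresh induction: replace $z$ by $1/\overline{z}$ in $\widetilde A_k(z)=[T_{-z_0}(z)]^k\overline{B_k(1/\overline{z})}$, take complex conjugates, and apply $\overline{T_{-z_0}(1/\overline{z})}=1/T_{-z_0}(z)$ once more to solve for $B_k(z)$; likewise for $A_k(z)$. I do not expect a genuine obstacle here — this is a routine induction — the only point demanding care is the consistent bookkeeping of the powers of $T_{-z_0}(z)$ together with the repeated use of the boundary identity $\overline{T_{-z_0}(1/\overline{z})}=1/T_{-z_0}(z)$, which is precisely what makes the Möbius factor come out as the clean power $[T_{-z_0}(z)]^k$ rather than a more complicated rational function.
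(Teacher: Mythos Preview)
Your proposal is correct and matches the paper's approach exactly: the paper states this lemma without a detailed proof, explicitly remarking that it is the analogue of Lemma~\ref{lemma:relation_between_A-k_B-k} in Section~2, and your plan --- either quote that result under the specialization $z_1=\cdots=z_{n+1}=z_0$, or rerun the same induction with $\prod_{\ell=1}^k T_{-z_\ell}(z)$ collapsed to $[T_{-z_0}(z)]^k$ --- is precisely that. The inductive calculation you sketch, including the use of $\overline{T_{-z_0}(1/\overline{z})}=1/T_{-z_0}(z)$ and the derivation of the second pair of identities from the first by substituting $z\mapsto 1/\overline{z}$ and conjugating, reproduces the proof of Lemma~\ref{lemma:relation_between_A-k_B-k} verbatim.
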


\begin{lemma}
For $k=0, \ldots , n$,
$$
  \widetilde{A}_k (z) B_k(z) - A_k(z)  \widetilde{B}_k(z) = [T_{-z_0}(z)]^k \prod_{\ell=0}^k (1-|\gamma_\ell|^2) .
$$
\end{lemma}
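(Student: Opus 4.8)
The plan is to mirror, in the degenerate one-node setting $z_1=\cdots=z_{n+1}=z_0$, the matrix-calculus proof already used for Lemma \ref{lemma:tilde-A-B-k}. First I would record the product representation obtained by iterating the recursion \eqref{eq:recurrence_formula} starting from the initial data \eqref{eq:initial-A-B_0}: for $k=1,\ldots,n$,
$$
\begin{pmatrix}
A_k(z) & \widetilde{A}_k(z) \\
B_k(z) & \widetilde{B}_k(z)
\end{pmatrix}
=
\begin{pmatrix}
T_{-z_0}(z) & \overline{\gamma_k} \\
\gamma_k T_{-z_0}(z) & 1
\end{pmatrix}
\cdots
\begin{pmatrix}
T_{-z_0}(z) & \overline{\gamma_1} \\
\gamma_1 T_{-z_0}(z) & 1
\end{pmatrix}
\begin{pmatrix}
\overline{\gamma_0} & 1 \\
1 & \gamma_0
\end{pmatrix},
$$
which is an immediate induction on $k$ (the case $k=1$ being \eqref{eq:A-1}). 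Then I would take determinants on both sides. Each factor $\left(\begin{smallmatrix} T_{-z_0}(z) & \overline{\gamma_j} \\ \gamma_j T_{-z_0}(z) & 1 \end{smallmatrix}\right)$ has determinant $T_{-z_0}(z)(1-|\gamma_j|^2)$ for $j=1,\ldots,k$, while $\left(\begin{smallmatrix} \overline{\gamma_0} & 1 \\ 1 & \gamma_0 \end{smallmatrix}\right)$ has determinant $-(1-|\gamma_0|^2)$. Since the determinant of the left-hand matrix is $A_k(z)\widetilde{B}_k(z)-\widetilde{A}_k(z)B_k(z) = -\bigl(\widetilde{A}_k(z)B_k(z)-A_k(z)\widetilde{B}_k(z)\bigr)$, the two minus signs cancel and one obtains exactly $\widetilde{A}_k(z)B_k(z)-A_k(z)\widetilde{B}_k(z) = [T_{-z_0}(z)]^k\prod_{\ell=0}^k(1-|\gamma_\ell|^2)$. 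The case $k=0$ is checked directly from \eqref{eq:initial-A-B_0}: $\widetilde{A}_0 B_0 - A_0\widetilde{B}_0 = 1-|\gamma_0|^2$.

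Alternatively, and perhaps preferably for the reader, I would give the same proof by a plain induction on $k$ that bypasses determinants: assuming the identity at level $k$, substitute the relations $A_{k+1}=T_{-z_0}(z)A_k+\overline{\gamma_{k+1}}B_k$, $\widetilde{A}_{k+1}=T_{-z_0}(z)\widetilde{A}_k+\overline{\gamma_{k+1}}\widetilde{B}_k$, $B_{k+1}=\gamma_{k+1}T_{-z_0}(z)A_k+B_k$, $\widetilde{B}_{k+1}=\gamma_{k+1}T_{-z_0}(z)\widetilde{A}_k+\widetilde{B}_k$ coming from \eqref{eq:recurrence_formula} into $\widetilde{A}_{k+1}B_{k+1}-A_{k+1}\widetilde{B}_{k+1}$. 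The quadratic-in-$T_{-z_0}(z)$ terms cancel and the remaining cross terms collapse to $(1-|\gamma_{k+1}|^2)\,T_{-z_0}(z)\,(\widetilde{A}_kB_k-A_k\widetilde{B}_k)$, after which the induction hypothesis closes the step. This is structurally the same bilinear bookkeeping already carried out for Lemmas \ref{lemma:tildeAK+1-Ak} and \ref{lemma:tildeBK+1-Bk}.

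I do not expect any genuine obstacle: the assertion is precisely the specialization of Lemma \ref{lemma:tilde-A-B-k} to $z_1=\cdots=z_{n+1}=z_0$ (so that $\prod_{\ell=1}^k T_{-z_\ell}(z)=[T_{-z_0}(z)]^k$ and $\Delta_{\ell+1}^{\ell}=\gamma_\ell$), and the only point needing minor care is matching the sign from $\det\left(\begin{smallmatrix} \overline{\gamma_0} & 1 \\ 1 & \gamma_0 \end{smallmatrix}\right)=-(1-|\gamma_0|^2)$ against the sign convention in $\det\left(\begin{smallmatrix} A_k & \widetilde{A}_k \\ B_k & \widetilde{B}_k \end{smallmatrix}\right)$.
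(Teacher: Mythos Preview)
Your proposal is correct and follows essentially the same approach as the paper: in Section~3 the paper states this lemma without a separate proof, pointing back to the matrix-calculus argument of Lemma~\ref{lemma:tilde-A-B-k}, which is exactly the product representation plus determinant computation you write out (your inductive alternative is just the component-wise unfolding of the same $2\times2$ determinant step). Your handling of the sign from $\det\left(\begin{smallmatrix}\overline{\gamma_0}&1\\1&\gamma_0\end{smallmatrix}\right)$ is the only extra detail, and it is correct.
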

\begin{lemma}
For $k=0, \ldots , n$,
\begin{align*}
&\widetilde{A}_k (z) \overline{\widetilde{A}_k (1/\overline{z})}
- A_k(z)  \overline{A_k (1/\overline{z})} = \prod_{\ell=0}^k (1-|\gamma_{\ell}|^2),\\
&B_k(z)\overline{B_k (1/\overline{z})} -  \widetilde{B}_k(z)\overline{\widetilde{B}_k (1/\overline{z})} = \prod_{\ell=0}^k (1-|\gamma_{\ell}|^2).
\end{align*}
\end{lemma}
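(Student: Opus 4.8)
The plan is to obtain this lemma as a mere restatement of the preceding determinant identity
\[
\widetilde{A}_k (z) B_k(z) - A_k(z)  \widetilde{B}_k(z) = [T_{-z_0}(z)]^k \prod_{\ell=0}^k (1-|\gamma_\ell|^2),
\]
combined with the reflection formulas of Lemma \ref{lemma:relation_between_Moebius_coefficients}, exactly as the analogous lemma in Section~2 was read off from Lemma \ref{lemma:tilde-A-B-k}. Apart from the elementary fact $\overline{T_{-z_0}(1/\overline z)} = 1/T_{-z_0}(z)$ already used in Section~2, the argument is purely algebraic, and the equalities are to be understood as identities of rational functions of $z$, valid wherever the expressions involved make sense. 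So no new idea is required.

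Concretely, I would first solve the four formulas of Lemma \ref{lemma:relation_between_Moebius_coefficients} for the reflected terms, obtaining
\[
\overline{\widetilde{A}_k(1/\overline z)} = \frac{B_k(z)}{[T_{-z_0}(z)]^k}, \quad
\overline{A_k(1/\overline z)} = \frac{\widetilde{B}_k(z)}{[T_{-z_0}(z)]^k}, \quad
\overline{B_k(1/\overline z)} = \frac{\widetilde{A}_k(z)}{[T_{-z_0}(z)]^k}, \quad
\overline{\widetilde{B}_k(1/\overline z)} = \frac{A_k(z)}{[T_{-z_0}(z)]^k}.
\]
Substituting the first two into the left side of the first asserted identity gives
\[
\widetilde{A}_k(z)\,\overline{\widetilde{A}_k(1/\overline z)} - A_k(z)\,\overline{A_k(1/\overline z)}
= \frac{\widetilde{A}_k(z) B_k(z) - A_k(z)\widetilde{B}_k(z)}{[T_{-z_0}(z)]^k},
\]
and since the numerator equals $[T_{-z_0}(z)]^k \prod_{\ell=0}^k (1-|\gamma_\ell|^2)$ by the preceding lemma, the powers of $T_{-z_0}(z)$ cancel, leaving $\prod_{\ell=0}^k (1-|\gamma_\ell|^2)$. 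For the second identity I substitute the last two to get
\[
B_k(z)\,\overline{B_k(1/\overline z)} - \widetilde{B}_k(z)\,\overline{\widetilde{B}_k(1/\overline z)}
= \frac{B_k(z)\widetilde{A}_k(z) - \widetilde{B}_k(z) A_k(z)}{[T_{-z_0}(z)]^k},
\]
which is the same quotient, hence again equals $\prod_{\ell=0}^k (1-|\gamma_\ell|^2)$. Note in passing that this computation shows the two left-hand sides in the statement coincide identically, which is why they share a single right-hand side; it would therefore suffice to state and prove either one of them.

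The only thing to watch is the bookkeeping of conjugations: Lemma \ref{lemma:relation_between_Moebius_coefficients} must be invoked at the argument $1/\overline z$, and then one uses that $z \mapsto 1/\overline z$ is an involution together with $\overline{T_{-z_0}(1/\overline z)} = 1/T_{-z_0}(z)$ to arrive at the four displayed reflection identities. There is no genuine obstacle here. An alternative would be a direct induction on $k$ from the recurrence \eqref{eq:recurrence_formula} with initial data \eqref{eq:initial-A-B_0}, exploiting the pseudo-unitary structure of the $2\times 2$ transfer matrices appearing there; this is feasible but longer, whereas the substitution argument above simply recycles results already in hand.
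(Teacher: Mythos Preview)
Your proposal is correct and follows exactly the approach indicated by the paper: in Section~2 the corresponding lemma is introduced with the remark that, because of the reflection formulas \eqref{eq:tildeAk_tildeBk}--\eqref{eq:Ak_Bk}, the determinant identity of Lemma~\ref{lemma:tilde-A-B-k} ``can also be written as follows,'' and Section~3 explicitly says the lemmas there are stated ``without detailed proofs'' as they correspond to those of Section~2. Your write-up simply makes this substitution explicit, which is precisely what the paper has in mind.
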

\begin{lemma}
For $k=0, \ldots , n$,
\begin{align*}
  \widetilde{A}_k(z)A_{k+1}(z)  -A_{k}(z) \widetilde{A}_{k+1}(z)&=
  \overline{\gamma_{k+1}}[T_{-z_{0}}(z)]^k\prod_{\ell=0}^k (1-|\gamma_{\ell}|^2),\\
B_{k}(z) \widetilde{B}_{k+1}(z)-\widetilde{B}_{k}(z)B_{k+1}(z) &=\gamma_{k+1}[T_{-z_0}(z)]^{k+1} \prod_{\ell=0}^{k}\left(1-|\gamma_{\ell}|^{2}\right).
\end{align*}
\end{lemma}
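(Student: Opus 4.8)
The plan is to obtain both identities by a direct substitution of the matrix recurrence (\ref{eq:recurrence_formula}) into the left-hand sides, mirroring the arguments of Lemmas \ref{lemma:tildeAK+1-Ak} and \ref{lemma:tildeBK+1-Bk} but with the specialization $z_1=\cdots=z_{n+1}=z_0$ and $\Delta_{k+1}^k=\gamma_k$. The only substantive input is the already-established single-point analog of Lemma \ref{lemma:tilde-A-B-k}, namely
$$\widetilde{A}_k(z)B_k(z)-A_k(z)\widetilde{B}_k(z)=[T_{-z_0}(z)]^k\prod_{\ell=0}^k(1-|\gamma_\ell|^2),$$
so the entire proof reduces to a short computation.

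For the first identity I would read off from (\ref{eq:recurrence_formula}) that
$$A_{k+1}(z)=T_{-z_0}(z)A_k(z)+\overline{\gamma_{k+1}}\,B_k(z),\qquad \widetilde{A}_{k+1}(z)=T_{-z_0}(z)\widetilde{A}_k(z)+\overline{\gamma_{k+1}}\,\widetilde{B}_k(z),$$
and insert these into $\widetilde{A}_k(z)A_{k+1}(z)-A_k(z)\widetilde{A}_{k+1}(z)$. The two terms carrying the factor $T_{-z_0}(z)$ cancel, leaving $\overline{\gamma_{k+1}}\bigl(\widetilde{A}_k(z)B_k(z)-A_k(z)\widetilde{B}_k(z)\bigr)$, and the displayed identity above then yields exactly $\overline{\gamma_{k+1}}[T_{-z_0}(z)]^k\prod_{\ell=0}^k(1-|\gamma_\ell|^2)$. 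For the second identity I would similarly use $B_{k+1}(z)=\gamma_{k+1}T_{-z_0}(z)A_k(z)+B_k(z)$ and $\widetilde{B}_{k+1}(z)=\gamma_{k+1}T_{-z_0}(z)\widetilde{A}_k(z)+\widetilde{B}_k(z)$; substituting into $B_k(z)\widetilde{B}_{k+1}(z)-\widetilde{B}_k(z)B_{k+1}(z)$, the $B_k\widetilde{B}_k$ contributions cancel and one factor $T_{-z_0}(z)$ can be pulled out, leaving $\gamma_{k+1}T_{-z_0}(z)\bigl(B_k(z)\widetilde{A}_k(z)-\widetilde{B}_k(z)A_k(z)\bigr)$, which by the same identity equals $\gamma_{k+1}[T_{-z_0}(z)]^{k+1}\prod_{\ell=0}^k(1-|\gamma_\ell|^2)$.

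There is essentially no analytic difficulty here; the only point requiring care is index bookkeeping. The recurrence (\ref{eq:recurrence_formula}) is stated only for $k=0,\ldots,n-1$, hence it produces $A_{k+1},\widetilde{A}_{k+1},B_{k+1},\widetilde{B}_{k+1}$ for $k\le n-1$ only; to make sense of the statement at $k=n$ one either extends the recurrence formally with an extra symbol $\gamma_{n+1}$ (the two identities then hold as polynomial identities in $\gamma_{n+1}$, $T_{-z_0}(z)$ and the $\gamma_\ell$), or one simply restricts the range to $0\le k\le n-1$, matching the ranges actually used in the sequel. As a sanity check I would also verify the case $k=0$ directly: using (\ref{eq:initial-A-B_0}) and (\ref{eq:A-1}) one gets $\widetilde{A}_0(z)A_1(z)-A_0(z)\widetilde{A}_1(z)=\overline{\gamma_1}(1-|\gamma_0|^2)$ and $B_0(z)\widetilde{B}_1(z)-\widetilde{B}_0(z)B_1(z)=\gamma_1 T_{-z_0}(z)(1-|\gamma_0|^2)$, in agreement with the asserted formulas.
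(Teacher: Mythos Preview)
Your proposal is correct and follows essentially the same approach as the paper: the paper states this lemma in Section 3 ``without detailed proofs'' and refers to the corresponding Lemmas \ref{lemma:tildeAK+1-Ak} and \ref{lemma:tildeBK+1-Bk} in Section 2, whose proofs are exactly the substitution of the recurrence into the left-hand sides followed by an application of the determinant identity (Lemma \ref{lemma:tilde-A-B-k}). Your remark about the index range at $k=n$ is a fair observation; the paper's Section 2 analogs are stated for $k=1,\ldots,n$ and $k=0,\ldots,n$ respectively, and the $k=n$ case here should indeed be read with $\gamma_{n+1}$ as a formal symbol (or the range restricted to $k\le n-1$), as you note.
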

\begin{lemma}
For $k=0, \ldots , n $, the inequality
$$
|B_k(z)|^2 - | A_k (z)|^2
\geq \prod_{\ell=0}^k (1-|\gamma_\ell|^2)
$$
holds for $z\in \overline{\mathbb D}$.
\end{lemma}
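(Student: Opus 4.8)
The plan is to reproduce, almost verbatim, the proof of Lemma~\ref{lemma:relation-between-B-k-A-k} from Section~2, arguing by induction on $k$; the present situation is the specialization $z_1=\cdots=z_{n+1}=z_0$ and $\Delta_{k+1}^{k}=\gamma_k$, so every $T_{-z_{k+1}}$ collapses to the single automorphism $T_{-z_0}$. For the base case $k=0$, the initial data \eqref{eq:initial-A-B_0} gives $|B_0(z)|^2-|A_0(z)|^2=1-|\gamma_0|^2$ for every $z$ (the entries being constants), so equality holds, and in particular the asserted inequality holds at $k=0$.

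For the inductive step, assume the inequality at level $k$ for some $0\le k\le n-1$. The recurrence \eqref{eq:recurrence_formula} gives $A_{k+1}(z)=T_{-z_0}(z)A_k(z)+\overline{\gamma_{k+1}}\,B_k(z)$ and $B_{k+1}(z)=\gamma_{k+1}\,T_{-z_0}(z)A_k(z)+B_k(z)$. I would expand $|B_{k+1}(z)|^2-|A_{k+1}(z)|^2$; the mixed terms $2\operatorname{Re}\!\bigl(\gamma_{k+1}\,T_{-z_0}(z)A_k(z)\,\overline{B_k(z)}\bigr)$ occur in both squared moduli and cancel, leaving the identity
$$|B_{k+1}(z)|^2-|A_{k+1}(z)|^2=\bigl(1-|\gamma_{k+1}|^2\bigr)\bigl(|B_k(z)|^2-|T_{-z_0}(z)|^2\,|A_k(z)|^2\bigr).$$

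To conclude, I would use that $T_{-z_0}$ is a disk automorphism, hence analytic on a neighbourhood of $\overline{\mathbb{D}}$ (its pole $1/\overline{z_0}$ has modulus $>1$) with $|T_{-z_0}(z)|\le 1$ for all $z\in\overline{\mathbb{D}}$; therefore $|B_k(z)|^2-|T_{-z_0}(z)|^2|A_k(z)|^2\ge |B_k(z)|^2-|A_k(z)|^2$, and combining with the induction hypothesis,
$$|B_{k+1}(z)|^2-|A_{k+1}(z)|^2\ge \bigl(1-|\gamma_{k+1}|^2\bigr)\prod_{\ell=0}^{k}\bigl(1-|\gamma_\ell|^2\bigr)=\prod_{\ell=0}^{k+1}\bigl(1-|\gamma_\ell|^2\bigr),$$
which completes the induction on $\overline{\mathbb{D}}$. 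There is no real obstacle here: the argument is routine, and the only two points deserving a line of care are the cancellation of the cross terms in the expansion above and the fact that the bound $|T_{-z_0}|\le 1$ remains valid on the closed disk, which is precisely what turns the identity into the stated inequality.
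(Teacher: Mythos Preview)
Your proof is correct and follows essentially the same route as the paper: the Section~3 lemma is stated there without proof, being the specialization of Lemma~\ref{lemma:relation-between-B-k-A-k}, and your induction via the identity $|B_{k+1}|^2-|A_{k+1}|^2=(1-|\gamma_{k+1}|^2)\bigl(|B_k|^2-|T_{-z_0}(z)|^2|A_k|^2\bigr)$ together with $|T_{-z_0}(z)|\le 1$ on $\overline{\mathbb D}$ is exactly the argument given for Lemma~\ref{lemma:relation-between-B-k-A-k}.
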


\begin{lemma}
For $k=0, \ldots , n$, the inequality
$
|\widetilde{B}_k (z)| < |B_k(z)|
$
holds for $z\in \overline{\mathbb D}$.
\end{lemma}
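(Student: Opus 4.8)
The plan is to mimic the argument used for Lemma~\ref{lemma:tildeB-and-B-k} in Section~2, with the hyperbolic divided differences $\Delta_{\ell+1}^{\ell}$ replaced by the Schur parameters $\gamma_\ell$ and the products $\prod_{\ell} T_{-z_{\ell}}(z)$ replaced by $[T_{-z_0}(z)]^k$. First I would invoke the lemma immediately preceding this one, which gives $|B_k(z)|^2 - |A_k(z)|^2 \ge \prod_{\ell=0}^k (1-|\gamma_\ell|^2)$ for all $z \in \overline{\mathbb D}$. The standing assumption $\gamma = (\gamma_0,\ldots,\gamma_n)\in{\mathbb D}^{n+1}$ forces $\prod_{\ell=0}^k (1-|\gamma_\ell|^2) > 0$, so this shows $|B_k(z)| > 0$ on $\overline{\mathbb D}$. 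Since $B_k$ is a rational function that is a polynomial in $T_{-z_0}(z)$ (of degree at most $k$), and $T_{-z_0}$ has its only pole at $1/\overline{z_0}\notin\overline{\mathbb D}$, the quotient $\widetilde{B}_k(z)/B_k(z)$ is analytic on an open neighbourhood of $\overline{\mathbb D}$.

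Next I would restrict to the unit circle. For $|z|=1$ one has $1/\overline{z}=z$ and $|T_{-z_0}(z)|=1$, so Lemma~\ref{lemma:relation_between_Moebius_coefficients} yields $\widetilde{B}_k(z) = [T_{-z_0}(z)]^k\,\overline{A_k(z)}$ and hence $|\widetilde{B}_k(z)| = |A_k(z)|$ there. Combining this with the inequality from the previous step gives, for $|z|=1$,
\[
|B_k(z)|^2 - |\widetilde{B}_k(z)|^2 = |B_k(z)|^2 - |A_k(z)|^2 \ge \prod_{\ell=0}^k (1-|\gamma_\ell|^2) > 0,
\]
so that $|\widetilde{B}_k(z)/B_k(z)| < 1$ on $\partial{\mathbb D}$.

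Finally I would apply the maximum modulus principle to the analytic function $\widetilde{B}_k/B_k$ on $\overline{\mathbb D}$, obtaining $\sup_{\overline{\mathbb D}}|\widetilde{B}_k/B_k| = \max_{\partial{\mathbb D}}|\widetilde{B}_k/B_k| < 1$, i.e.\ $|\widetilde{B}_k(z)| < |B_k(z)|$ for every $z\in\overline{\mathbb D}$, as claimed. I do not expect a real obstacle here; the only points that need a little care are that the \emph{strict} boundary inequality relies on the hypothesis $\gamma_\ell\in{\mathbb D}$ (so the product is strictly positive), and that the nonvanishing of $B_k$ on the closed disk is exactly what licenses both the analyticity of the quotient and the use of the maximum principle — and both are delivered by the preceding lemma.
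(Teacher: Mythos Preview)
Your proposal is correct and follows essentially the same approach as the paper: use the preceding lemma to show $B_k$ is zero-free on $\overline{\mathbb D}$, invoke Lemma~\ref{lemma:relation_between_Moebius_coefficients} on $\partial\mathbb D$ (where $|T_{-z_0}(z)|=1$ and $1/\overline{z}=z$) to identify $|\widetilde{B}_k|=|A_k|$ there, and then apply the maximum modulus principle to $\widetilde{B}_k/B_k$. The paper in fact omits the proof in Section~3, referring back to the identical argument for Lemma~\ref{lemma:tildeB-and-B-k} in Section~2, which is exactly what you reproduced.
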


\begin{lemma}\label{prop:interior_point}
Let the Schur parameter $\gamma =(\gamma_0,\ldots ,\gamma_n) \in \mathbb{D}^{n+1}$ and $z \in {\mathbb D} \backslash \{ z_0 \}$.
Then $f_{\gamma,0}(z)$
is  an interior point of the set $V(z, \gamma )$.
\end{lemma}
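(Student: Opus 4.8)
The plan is to mimic the proof of Lemma \ref{lem:interior_point} from Section 2. For fixed $z \in {\mathbb D}\setminus\{z_0\}$, the function $f_{\gamma,\varepsilon}(z)$ is an analytic function of $\varepsilon \in \overline{\mathbb D}$, so by the open mapping theorem it suffices to show that $\varepsilon \mapsto f_{\gamma,\varepsilon}(z)$ is non-constant. Concretely, I would set
$$
\psi(z) = \left.\frac{\partial}{\partial\varepsilon}\left\{f_{\gamma,\varepsilon}(z)\right\}\right|_{\varepsilon=0}
$$
and prove that $\psi(z)\neq 0$ for every $z \in {\mathbb D}\setminus\{z_0\}$.

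First I would use the closed-form representation \eqref{eq:extremal_f_and_schur_polynomials} from Lemma \ref{lemma:Schur_polynomial_and_Caratheodoy_problem}, namely
$$
f_{\gamma,\varepsilon}(z) = \frac{\varepsilon T_{-z_0}(z)\widetilde{A}_n(z) + \widetilde{B}_n(z)}{\varepsilon T_{-z_0}(z)A_n(z) + B_n(z)},
$$
and differentiate the quotient in $\varepsilon$, then evaluate at $\varepsilon=0$. The numerator of the resulting fraction is $T_{-z_0}(z)\bigl(\widetilde{A}_n(z)B_n(z) - \widetilde{B}_n(z)A_n(z)\bigr)$ and the denominator is $B_n(z)^2$. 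By the version of Lemma \ref{lemma:tilde-A-B-k} adapted to the one-point setting (stated in the excerpt immediately before Lemma \ref{lemma:relation_between_Moebius_coefficients}), one has $\widetilde{A}_n(z)B_n(z) - A_n(z)\widetilde{B}_n(z) = [T_{-z_0}(z)]^n\prod_{\ell=0}^n(1-|\gamma_\ell|^2)$, so
$$
\psi(z) = \frac{[T_{-z_0}(z)]^{n+1}\prod_{\ell=0}^n(1-|\gamma_\ell|^2)}{B_n(z)^2}.
$$

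It then remains to check that $\psi(z)$ has no zeros on ${\mathbb D}\setminus\{z_0\}$. The product $\prod_{\ell=0}^n(1-|\gamma_\ell|^2)$ is strictly positive because $\gamma \in {\mathbb D}^{n+1}$; the factor $[T_{-z_0}(z)]^{n+1}$ vanishes only at $z=z_0$, which is excluded; and $B_n(z)\neq 0$ on $\overline{\mathbb D}$ by the one-point analogue of Lemma \ref{lemma:relation-between-B-k-A-k} (equivalently, it follows from $|\widetilde{B}_n(z)|<|B_n(z)|$, the one-point version of Lemma \ref{lemma:tildeB-and-B-k}). Hence $\psi(z)\neq 0$, so $f_{\gamma,\varepsilon}(z)$ is a non-constant analytic function of $\varepsilon$, and its image of $\overline{\mathbb D}$ — which is contained in $V(z,\gamma)$ — has $f_{\gamma,0}(z)$ as an interior point. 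The only mild obstacle is bookkeeping: one must be sure the one-point specializations of the Section 2 lemmas (which the excerpt records without proof just above) are exactly the ones quoted here, but these are routine substitutions $z_1=\cdots=z_{n+1}=z_0$, $\Delta_{\ell+1}^\ell=\gamma_\ell$ into the already-proved identities.
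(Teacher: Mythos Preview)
Your proposal is correct and follows exactly the approach the paper intends: Section~3 states this lemma without proof as the one-point analogue of Lemma~\ref{lem:interior_point}, and your argument is precisely the specialization of that proof (using \eqref{eq:extremal_f_and_schur_polynomials} together with the Section~3 versions of Lemmas~\ref{lemma:tilde-A-B-k} and~\ref{lemma:tildeB-and-B-k}).
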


Now we are in a position to show our main results as follows.

\begin{theorem}[Rogosinski-Pick Lemma for higher-order hyperbolic derivatives]\label{thm:genelized-Rogosinski-Pick-thm}
Let $n \in {\mathbb N}$, $z_0\in \mathbb{D}$ and the Schur parameter
$\gamma  = (\gamma_0, \ldots , \gamma_n ) \in {\mathbb D}^{n+1}$. Suppose that $f\in \mathcal S(\gamma)$.
Then for each fixed $z \in {\mathbb D} \backslash \{z_0 \}$, the region of values of $f(z)$ is the closed disk
$$
V(z,\gamma) = \{ f_{\gamma , \varepsilon}(z) : \varepsilon \in \overline{\mathbb D} \}=\D(c(z),\rho(z)),
$$
where
\begin{align*}
c (z) &=
   \frac{\overline{B_n(z)} \widetilde{B}_n(z) -|T_{-z_0}(z)|^2 \overline{A_n(z)} \widetilde{A}_n(z)}{ |B_n(z)|^2 -|T_{-z_0}(z)|^2 | A_n(z)|^2 } ,\\[2mm]
\rho(z) &=
     \frac{|T_{-z_0}(z)|^{n+1} \prod_{k=0}^n (1-|\gamma_k|^2) }{ |B_n(z)|^2 -|T_{-z_0}(z)|^2 | A_n(z)|^2 }.
\end{align*}
Furthermore
$$
 f(z)\in \partial \D(c(z),\rho(z))
$$
if and only if, $f \equiv f_{\gamma ,\varepsilon }$ for $\varepsilon \in \partial {\mathbb D}$.
\end{theorem}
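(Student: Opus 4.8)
The plan is to run, in the confluent situation $z_1=\cdots=z_{n+1}=z_0$, the same argument that proves the multi-point Schwarz-Pick Lemma (Theorem~\ref{thm:Main-Schwarz-Pick-theorem}), with the hyperbolic divided differences replaced by the prescribed Schur data $\gamma_0,\dots,\gamma_n$. Fix $z\in\mathbb{D}\setminus\{z_0\}$, so that $0<|T_{-z_0}(z)|<1$. First I would invoke the representation lemma of this section: for $f\in\mathcal{S}(\gamma)$ there is a unique $f^*\in\mathcal{S}$ with
$$f(z)=\frac{T_{-z_0}(z)\widetilde{A}_n(z)f^*(z)+\widetilde{B}_n(z)}{T_{-z_0}(z)A_n(z)f^*(z)+B_n(z)},$$
and conversely every $f^*\in\mathcal{S}$ produces an $f\in\mathcal{S}(\gamma)$. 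Solving for $T_{-z_0}(z)f^*(z)$ and using $|f^*(z)|\le 1$ rewrites membership in $\mathcal{S}(\gamma)$ as
$$\left|\frac{B_n(z)f(z)-\widetilde{B}_n(z)}{\widetilde{A}_n(z)-A_n(z)f(z)}\right|\le|T_{-z_0}(z)|,$$
with equality precisely when $|f^*(z)|=1$.

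Next I would read the left-hand side as $|(f(z)-p)/(f(z)-q)|$ with $p=\widetilde{B}_n(z)/B_n(z)$ and $q=\widetilde{A}_n(z)/A_n(z)$, so the inequality becomes $|(f(z)-p)/(f(z)-q)|\le k$ with $k=|T_{-z_0}(z)|\,|A_n(z)|/|B_n(z)|$. The Section~3 estimate $|B_n(z)|^2-|A_n(z)|^2\ge\prod_{\ell=0}^n(1-|\gamma_\ell|^2)>0$ gives $|B_n(z)|>0$ and $k<|A_n(z)|/|B_n(z)|<1$, so Lemma~\ref{lem:disk} applies and produces the closed disk $\overline{\mathbb{D}}(c(z),\rho(z))$; a routine simplification identifies the center with the stated $c(z)$, and replacing $|B_n\widetilde{A}_n-A_n\widetilde{B}_n|$ by means of the determinant identity $\widetilde{A}_n(z)B_n(z)-A_n(z)\widetilde{B}_n(z)=[T_{-z_0}(z)]^n\prod_{\ell=0}^n(1-|\gamma_\ell|^2)$ of this section identifies the radius with the stated $\rho(z)$. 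Hence $V(z,\gamma)\subseteq\overline{\mathbb{D}}(c(z),\rho(z))$. (If $A_n(z)=0$ one reads $k=0$, with the disk $\overline{\mathbb{D}}(\widetilde{B}_n(z)/B_n(z),\,|T_{-z_0}(z)\widetilde{A}_n(z)/B_n(z)|)$, consistent with the above.)

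For the reverse inclusion I would use the extremal functions $f_{\gamma,\varepsilon}$ of \eqref{def:extremal_f}: each lies in $\mathcal{S}(\gamma)$, and, being the image of the constant $f^*\equiv\varepsilon$ under the representation, $f_{\gamma,\varepsilon}(z)=(\varepsilon T_{-z_0}(z)\widetilde{A}_n(z)+\widetilde{B}_n(z))/(\varepsilon T_{-z_0}(z)A_n(z)+B_n(z))$. Thus $\varepsilon\mapsto f_{\gamma,\varepsilon}(z)$ is a M\"obius transformation of $\varepsilon$, non-degenerate because $T_{-z_0}(z)\ne 0$ and the determinant above is nonzero. A direct computation of $f_{\gamma,\varepsilon}(z)-c(z)$ for $|\varepsilon|=1$, parallel to the one in the proof of Theorem~\ref{thm:Main-Schwarz-Pick-theorem}, shows it equals a factor of modulus $\rho(z)$ times a quotient of modulus one, so $|f_{\gamma,\varepsilon}(z)-c(z)|=\rho(z)$ for every unimodular $\varepsilon$. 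Since a M\"obius transformation carries $\partial\mathbb{D}$ onto a full circle and $\overline{\mathbb{D}}$ onto the closed disk it bounds, and since Lemma~\ref{prop:interior_point} already places $f_{\gamma,0}(z)$ in the interior of $V(z,\gamma)$, the image of $\overline{\mathbb{D}}$ is exactly $\overline{\mathbb{D}}(c(z),\rho(z))$; combined with the previous paragraph this yields $\overline{\mathbb{D}}(c(z),\rho(z))=\{f_{\gamma,\varepsilon}(z):\varepsilon\in\overline{\mathbb{D}}\}\subseteq V(z,\gamma)\subseteq\overline{\mathbb{D}}(c(z),\rho(z))$, hence equality. Finally, $f(z)\in\partial\mathbb{D}(c(z),\rho(z))$ forces equality in the displayed inequality, i.e. $|f^*(z)|=1$ at the interior point $z$, so by the maximum modulus principle $f^*$ is a unimodular constant $\varepsilon$ and $f\equiv f_{\gamma,\varepsilon}$; the converse is the identity $|f_{\gamma,\varepsilon}(z)-c(z)|=\rho(z)$ just noted.

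The analytic content is inherited wholesale from Theorem~\ref{thm:Main-Schwarz-Pick-theorem}, so I expect the only genuine obstacle to be bookkeeping: verifying that the Section~3 counterparts of the Section~2 lemmas (the determinant identity, the bounds $|A_n|<|B_n|$ and $|\widetilde{B}_n|<|B_n|$ on $\overline{\mathbb{D}}$, and the interior-point lemma) remain available once all nodes collapse to $z_0$, keeping the non-vanishing of the determinant (which needs $z\ne z_0$) in view so the M\"obius map is non-degenerate, and tracking the powers $[T_{-z_0}(z)]^n$ and $[T_{-z_0}(z)]^{n+1}$ through the confluent specialization.
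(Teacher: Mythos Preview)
Your proposal is correct and follows essentially the same route as the paper: the paper does not write out a separate proof of Theorem~\ref{thm:genelized-Rogosinski-Pick-thm} but instead states the confluent analogues of the Section~2 lemmas and relies on the reader to rerun the proof of Theorem~\ref{thm:Main-Schwarz-Pick-theorem} verbatim, exactly as you outline. Your sketch is in fact a bit more complete than the paper's own argument, since you spell out the reverse inclusion via the M\"obius map $\varepsilon\mapsto f_{\gamma,\varepsilon}(z)$ together with Lemma~\ref{prop:interior_point}, and you note the degenerate case $A_n(z)=0$, neither of which the paper makes explicit.
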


\begin{remark}
We claim that $\rho(z)\to 0$ as $n\to \infty$. Indeed, for $z\in \D\setminus\{z_0\}$,
$$|B_n(z)|^2 -|T_{-z_0}(z)|^2 | A_n(z)|^2\ge |B_n(z)|^2 -| A_n(z)|^2\ge \prod_{k=0}^n (1-|\gamma_k|^2).$$
Therefore,
$$
\rho(z) =
     \frac{|T_{-z_0}(z)|^{n+1} \prod_{k=0}^n (1-|\gamma_k|^2) }{ |B_n(z)|^2 -|T_{-z_0}(z)|^2 | A_n(z)|^2 }
     \le |T_{-z_0}(z)|^{n+1},$$
     where $|T_{-z_0}(z)|^{n+1}\rightarrow 0$ as $n\to \infty$.
\end{remark}

It is clear that for a given Schur parameter $\gamma $,
the hypotheses of (\ref{boundary}) and \eqref{exterior}
in Theorem \ref{thm:Schur}  provide a straightforward solution to Problem \ref{the_problem}.

\begin{theorem}
Let  $\gamma  =(\gamma_0, \ldots , \gamma_n) \in \overline{{\mathbb D}}^{n+1}$ be a Schur parameter. If the hypothesis of {\rm (\ref{boundary})} in Theorem \ref{thm:Schur} holds,
then $V(z,\gamma )$ reduces to a set consisting of a single point
$$
w= T_{\gamma_0} ( T_{-z_0}(z) T_{\gamma_1}(\cdots T_{-z_0}(z) T_{\gamma_{j-1}} (\gamma_j T_{-z_0}(z)  ) \cdots )),
$$
and $\gamma =( \gamma_0, \ldots , \gamma_j, 0, \ldots , 0 )$ is  the Schur parameter.

If the hypothesis of {\rm (\ref{exterior})} in Theorem \ref{thm:Schur} holds,
then $V(z,\gamma ) = \emptyset$.
\end{theorem}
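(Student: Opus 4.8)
The plan is to obtain both conclusions as immediate consequences of Theorem \ref{thm:Schur}, using only the definition of the variability region. The key observation is that $V(z,\gamma)=\{f(z):f\in\mathcal{S}(\gamma)\}$ is nothing but the image of the class $\mathcal{S}(\gamma)$ under the evaluation map $f\mapsto f(z)$; hence any time Theorem \ref{thm:Schur} describes $\mathcal{S}(\gamma)$ completely, the description of $V(z,\gamma)$ follows at once, with no further work.

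First I would treat the case in which the hypothesis of {\rm (\ref{boundary})} in Theorem \ref{thm:Schur} holds, so that $|\gamma_0|<1,\ldots,|\gamma_{j-1}|<1$, $|\gamma_j|=1$ and $\gamma_{j+1}=\cdots=\gamma_n=0$ for some $j\in\{0,\ldots,n\}$. By Case {\rm (\ref{boundary})} of Theorem \ref{thm:Schur}, $\mathcal{S}(\gamma)$ consists of exactly one function, namely the Blaschke product of degree $j$
$$f_0(z)=T_{\gamma_0}\big(T_{-z_0}(z)\,T_{\gamma_1}(\cdots T_{-z_0}(z)\,T_{\gamma_{j-1}}(\gamma_j T_{-z_0}(z))\cdots)\big).$$
Evaluating at a fixed $z\in\mathbb{D}\setminus\{z_0\}$ then gives $V(z,\gamma)=\{f_0(z)\}=\{w\}$ with $w$ exactly as in the statement. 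I would also insert a remark recalling that the relation $\gamma_{j+1}=\cdots=\gamma_n=0$ is not an independent hypothesis but is forced: if $|H^jf(z_0)|=|\gamma_j|=1$ for some $f\in\mathcal{S}$, then by the maximum modulus principle $\Delta^jf\equiv\gamma_j$ is a unimodular constant, whence $H^kf(z_0)=\Delta^kf(z_0)=0$ for every $k>j$ --- precisely the argument already used in the proof of Theorem \ref{thm:Schur}{\rm (\ref{boundary})}.

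For the case in which the hypothesis of {\rm (\ref{exterior})} in Theorem \ref{thm:Schur} holds, Case {\rm (\ref{exterior})} of that theorem gives $\mathcal{S}(\gamma)=\emptyset$, and therefore $V(z,\gamma)=\{f(z):f\in\mathcal{S}(\gamma)\}=\emptyset$ for every $z\in\mathbb{D}\setminus\{z_0\}$, which is the second assertion. There is essentially no obstacle in this argument; the only point meriting a line of justification is that the iterated Möbius expression defining $w$ is meaningful for every such $z$, which holds because $f_0$ is a genuine finite Blaschke product, hence analytic on all of $\mathbb{D}$: the innermost factor $\gamma_j T_{-z_0}(z)$ maps $\mathbb{D}$ into $\mathbb{D}$, and each $T_{\gamma_k}$ with $|\gamma_k|<1$ maps $\overline{\mathbb{D}}$ into $\overline{\mathbb{D}}$, so no pole is ever encountered and $w\in\overline{\mathbb{D}}$ is well defined and independent of any choice.
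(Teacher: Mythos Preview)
Your proposal is correct and matches the paper's approach: the paper does not give a separate proof of this theorem but simply remarks that the hypotheses of {\rm(\ref{boundary})} and {\rm(\ref{exterior})} in Theorem~\ref{thm:Schur} ``provide a straightforward solution to Problem~\ref{the_problem},'' which is precisely the evaluation-map argument you spell out. Your added justifications (that $\gamma_{j+1}=\cdots=\gamma_n=0$ is forced and that $w$ is well defined) are appropriate elaborations of what the paper leaves implicit.
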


It is worth noting that, for the Schur parameter
$\gamma=(\gamma_0)\in \D$, the classical Schwarz-Pick Lemma is a straightforward corollary of Theorem \ref{thm:genelized-Rogosinski-Pick-thm}. Furthermore,
Theorem \ref{thm:genelized-Rogosinski-Pick-thm} asserts that for $f\in\ss$ and $z_0\in\D$,
 depending on the hyperbolic derivatives up to a certain
order at $z_0$, we can determine the variability region of $f(z)$ for any point $z\in \D$. We will see that Rogosinski's Lemma is also a corollary of Theorem \ref{thm:genelized-Rogosinski-Pick-thm}. For instance, let $z_0=\gamma_0=0$, $f\in \H$ and $H^1 f(0)=\gamma_1$. From \eqref{eq:A-1}, we obtain that $$A_1(z)=\overline{\gamma_1},\widetilde {A_1}(z)=z,B_1(z)=1,\widetilde {B_1}(z)=\gamma_1 z.$$
By Theorem \ref{thm:genelized-Rogosinski-Pick-thm}, the range of values of $f(z)$ is the closed disk $\overline{\D}(c(z),\rho(z))$, where
 $$c(z)=\dfrac{z\gamma_1(1-|z|^2)}{1-|z|^2|\gamma_1|^2},\quad
\rho(z)= \dfrac{|z|^2(1-|\gamma_1|^2)}{1-|z|^2|\gamma_1|^2},$$
which is Rogosinski's Lemma since $f'(0)=\gamma_1$.

We can avoid the restriction $f(0) = 0$ and obtain an analogous version, which is called the Rogosinski-Pick Lemma,  for any point of the unit disk  different from $z_0$ (see \cite{kaptanoglu2002refine}, \cite{rivard2013application}).

\begin{customthm}{D (Rogosinski-Pick Lemma)}
Let $z_0, \gamma_0\in \D$ and $\gamma_1\in \overline{\D}$. Suppose that $g\in \H$, $f(z_0) = \gamma_0$ and $H^1 f(z_0)=\gamma_1$. Let $z\in\D\setminus\{ z_0\}$.
 \begin{enumerate}
\item If $|\gamma_1|=1$, then $f(z)=T_{\gamma_0} ( \gamma_1 T_{-z_0}(z))$.

\item If $|\gamma_1|<1$, then the range of values of $f(z)$ is the closed disk $\overline{\D}(c(z),\rho(z))$, where
\begin{align*}
c (z) &=
   \frac{\overline{B_1(z)} \widetilde{B}_1(z) -|T_{-z_0}(z)|^2 \overline{A_1(z)} \widetilde{A}_1(z)}{ |B_1(z)|^2 -|T_{-z_0}(z)|^2 | A_1(z)|^2 } ,\\[2mm]
\rho(z) &=
     \frac{|T_{-z_0}(z)|^{2} (1-|\gamma_0|^2)(1-|\gamma_1|^2) }{ |B_1(z)|^2 -|T_{-z_0}(z)|^2 | A_1(z)|^2 },
\end{align*}
and $A_1 (z)$, $\widetilde{A}_1(z)$,
  $B_1 (z)$, $\widetilde{B}_1 (z)$ are defined in \eqref{eq:A-1}.
In particular, if
 $\gamma_0=0$, then
$$
c(z)=\frac{\gamma_1T_{-z_0}(z)(1-|T_{-z_0}(z)|^2)}{1-|T_{-z_0}(z)|^2|\gamma_1|^2}\quad\mathrm{and}\quad \rho(z)=\frac{|T_{-z_0}(z)|^2(1-|\gamma_1|^2)}{1-|T_{-z_0}(z)|^2|\gamma_1|^2}.
$$
\end{enumerate}
\end{customthm}

Finally, we end this section with the following invariance property.
\begin{lemma}
Let $n \in {\mathbb N}$, $z_0,z\in \mathbb{D}$ and the Schur parameter
$\gamma  = (\gamma_0, \ldots , \gamma_n ) \in {\mathbb D}^{n+1}$. Suppose that $f_0=f\in \mathcal S(\gamma)$ and
$f_{j+1}(z)=[f_j(z),\gamma_j]/T_{-z_0}(z)$, $\gamma_j=f_j(z_0)$ for $j=0,1,...,n$. Let
$$g_0(z)=g(z)=f(\frac{z+z_0}{1+\overline{z_0}z}),\quad g_{j+1}(z)=\frac{[g_j(z),\delta_j]}{z},$$
and $\delta_j=g_j(0)$ for $j=0,1,...,n$.
Then
$f_j(z)=g_j \circ T_{-z_0}(z)$ and $\gamma_j=\delta_j$ for $j=0,1,...,n$.
\end{lemma}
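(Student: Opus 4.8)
The plan is to recognize that both recursions in the statement are instances of a single operator and then transport one into the other by the chain rule for $\Delta$. First I would observe that, since $T_{-z_0}(z)=[z,z_0]$ and $[z,0]=T_0(z)=z$, the defining relations are literally $f_{j+1}=\Delta_{z_0}f_j$ and $g_{j+1}=\Delta_0 g_j$, with $\gamma_j=f_j(z_0)$ and $\delta_j=g_j(0)$ supplying exactly the base points of those operators. Moreover $g_0(z)=f\big(T_{z_0}(z)\big)=f_0\circ T_{z_0}(z)$ by the very definition of $T_{z_0}$. Since $\Delta_{z_0}$ (resp. $\Delta_0$) maps $\ss$ into $\ss$, all the $f_j,g_j$ lie in $\ss$; because $|\gamma_j|<1$ (so $\gamma_j=f_j(z_0)$ is an interior value and $f_j$ is not a unimodular constant), each $f_j$, and likewise each $g_j$, lies in $\H$, so the Section~1 chain rule may be applied to them.

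The key computation is that $\Delta_0 T_{z_0}\equiv 1$. Indeed $T_{z_0}(0)=z_0$, and using $[T_{z_0}(w),z_0]=[T_{z_0}(w),-(-z_0)]=T_{-z_0}\big(T_{z_0}(w)\big)=w$ together with $[w,0]=w$ one gets, for $w\neq 0$, $\Delta_0 T_{z_0}(w)=[T_{z_0}(w),z_0]/[w,0]=w/w=1$; at $w=0$ the removable value is $T_{z_0}'(0)/(1-|z_0|^2)=1$. (This is just the general fact that $\Delta$ of a disk automorphism is a unimodular constant, here equal to $1$.)

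Then I would prove both assertions simultaneously by induction on $j$. For $j=0$: $g_0\circ T_{-z_0}(z)=f\big(T_{z_0}(T_{-z_0}(z))\big)=f(z)=f_0(z)$, and $\delta_0=g_0(0)=f(z_0)=\gamma_0$. For the inductive step, assume $f_j=g_j\circ T_{-z_0}$, equivalently $g_j=f_j\circ T_{z_0}$, and $\gamma_j=\delta_j$. Applying the chain rule $\Delta_{0}(f_j\circ T_{z_0})=(\Delta_{T_{z_0}(0)}f_j)\circ T_{z_0}\cdot\Delta_0 T_{z_0}$ with $T_{z_0}(0)=z_0$ and $\Delta_0 T_{z_0}\equiv 1$ yields $g_{j+1}=\Delta_0 g_j=(\Delta_{z_0}f_j)\circ T_{z_0}=f_{j+1}\circ T_{z_0}$, hence $f_{j+1}=g_{j+1}\circ T_{-z_0}$; evaluating at $z_0$ (where $T_{-z_0}(z_0)=0$) gives $\delta_{j+1}=g_{j+1}(0)=f_{j+1}(z_0)=\gamma_{j+1}$, which closes the induction.

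The only delicate point is the bookkeeping at the ``diagonal'' point: one must verify that the quotients defining $f_{j+1}$ and $g_{j+1}$ do agree with $\Delta_{z_0}f_j$ and $\Delta_0 g_j$ including at the removable singularities $z_0$ and $0$, which is precisely where $\gamma_j=f_j(z_0)$, $\delta_j=g_j(0)$ and $|\gamma_j|<1$ are used, and hence that all $f_j,g_j\in\H$ so that the chain rule is legitimately invoked. Everything else is routine composition algebra with $T_{z_0}$ and $T_{-z_0}$.
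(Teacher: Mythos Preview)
Your argument is correct and follows essentially the same route as the paper: induction on $j$ combined with the chain rule for $\Delta$, using that the hyperbolic difference quotient of the automorphism is the constant $1$. The only cosmetic difference is that the paper applies the chain rule to $f_j=g_j\circ T_{-z_0}$ and computes $\Delta_{z_0}T_{-z_0}\equiv 1$, whereas you pass to $g_j=f_j\circ T_{z_0}$ and compute $\Delta_0 T_{z_0}\equiv 1$; these are equivalent, and your bookkeeping of the removable points and the hypothesis $|\gamma_j|<1$ (to ensure $f_j,g_j\in\H$) is in fact slightly more explicit than the paper's.
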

\begin{proof}
We proof this lemma by induction. For $j=0$, it is obvious that $f_0(z)=g_0 \circ T_{-z_0}(z)$ and $\gamma_0=f_0(z_0)=g_0(0)=\delta_0$.

For $j=1$, from the chain rule, we have $\Delta_{z_0}(g\circ T_{z_0})=(\Delta_{T_{-z_0}(z_0)}g)\circ T\cdot \Delta_{z_0}T_{-z_0}$.
We note that $T_{-z_0}(z_0)=0$ and $$\Delta_{z_0}T_{-z_0}(z)=\frac{[T_{-z_0}(z),T_{-z_0}(z_0)]}{[z,z_0]}
=\frac{[T_{-z_0}(z),0]}{[z,z_0]}=1.$$
Thus $\Delta_{z_0}(g\circ T)=(\Delta_0 g)\circ T_{-z_0}$, which is $f_1=g_1 \circ T_{-z_0}$. In particular, $f_1(z_0)=g_1(T_{-z_0}(z_0))=g_1(0)=\delta_1$.

We suppose that for $j\le k$, $f_j=g_j \circ T_{-z_0}$ and $\gamma_j=\delta_j$, then for $j=k+1$, by induction,
\begin{align*}
  f_{k+1}(z)&=\Delta_{z_0}f_k(z)=\Delta_{z_0}(g_k \circ T_{-z_0})(z)\\
  &=(\Delta_{T_{-z_0}(z_0)}g_k)\circ T_{-z_0}(z)\cdot \Delta_{z_0}T_{-z_0}(z)\\
  &=(\Delta_{0}g_k)\circ T_{-z_0}(z)\\
  &=g_{k+1}\circ T_{-z_0}(z).
\end{align*}
In particular, we have $f_{k+1}(z_0)=g_{k+1}(T_{-z_0}(z_0))=\delta_{k+1}$.
\end{proof}
\section*{Acknowledgement}
The author would like to express his deep gratitude to Prof. Toshiyuki Sugawa for his valuable comments and instructive suggestions.

\end{document}